\def \[{\begin{equation}}
\def \]{\end{equation}}
\newtheorem{thm}{Theorem}[section]
\newtheorem{Case}{Case}
\newtheorem{lem}[thm]{Lemma}
\newtheorem{pb}[thm]{Problem}
\newenvironment{wst}
{\setlength{\leftmargini}{1.5\parindent}
 \begin{itemize}
 \setlength{\itemsep}{-1.1mm}}
{\end{itemize}}
\begin{document}

\title{\bf On the largest and least eigenvalues of eccentricity matrix of trees }

\author{Xiaocong He\footnote{Corresponding author}}
\date{}

\maketitle

\begin{center}
School of Mathematics and Statistics, Central South University, New Campus, \\[2pt]
Changsha, Hunan, 410083, PR China\\[5pt]
hexc2018@qq.com (X.C.~He)
\medskip
\end{center}

\begin{abstract}
The eccentricity matrix $\varepsilon(G)$ of a graph $G$ is constructed from the distance matrix of $G$ by keeping only the largest distances for each row and each column. This matrix can be interpreted as the opposite of the adjacency matrix obtained from the distance matrix by keeping only the distances equal to 1 for each row and each column. The $\varepsilon$-eigenvalues of a graph $G$ are those of its eccentricity matrix $\varepsilon(G)$. Wang et al \cite{e} proposed the problem of determining the maximum $\varepsilon$-spectral radius of trees with given order. In this paper, we consider the above problem of $n$-vertex trees with given diameter. The maximum $\varepsilon$-spectral radius of $n$-vertex trees with fixed odd diameter is obtained, and the corresponding extremal trees are also determined. Recently, Wei et al. \cite{w} determined all connected graphs on $n$ vertices of maximum degree less than $n-1$, whose least eccentricity eigenvalues are in $[-2\sqrt{2}, -2]$. Denote by $S_{n}$ the star on $n$ vertices. For tree $T$ with order $n\geq3$, it \cite{w} was proved that $\varepsilon_n(T)\leq-2$ with equality if and only if $T\cong S_n$. According to the above results, the trees of order $n\geq3$ with least $\varepsilon$-eigenvalues in $[-2\sqrt{2},0)$ are only $S_n$. Motivated by \cite{w}, we determine the trees with least $\varepsilon$-eigenvalues in $[-2-\sqrt{13},-2\sqrt{2})$.
\end{abstract}

\vspace{2mm} \noindent{\bf Keywords}: The eccentricity matrix; Spectral radius; The least eigenvalue; Diameter
\vspace{2mm}

\setcounter{section}{0}
\section{Introduction}\setcounter{equation}{0}
In this paper, we only consider connected and simple graphs, and refer to Bondy and Murty \cite{2} for notations and terminologies used but not defined here.

Let $G$ be a graph with vertex set $V(G)$ and edge set $E(G)$. $G-v$ (resp. $G-uv$) is the graph obtained from $G$ by deleting vertex $v$ together with incident edges (resp. edge $uv\in E(G)$). This notation is naturally extended if more than one vertex or edge are deleted. Similarly, $G+uv$ is obtained from $G$ by adding an edge $uv \notin E(G)$. If $U\subseteq V(G)$, then we write $G[U]$ to denote the induced subgraph of $G$ with vertex set $U$ and two vertices being adjacent if and only if they are adjacent in $G$. A \textit{pendant vertex} is the vertex of degree 1 and a \textit{supporting vertex} is the neighbor of a pendant vertex. A \textit{pendant edge} is an edge which is incident to a supporting vertex and a pendant vertex. Denote by $P_n, C_n, S_{n}$ and $K_n$ the path, cycle, star and complete graph on $n$ vertices, respectively. An \textit{acyclic} graph is one that contains no cycles. A connected acyclic graph is called a tree. If the tree is nontrivial, a vertex of degree one is called a leaf of the tree. A \textit{caterpillar tree} is a tree with a single path containing at least one endpoint of every edge. For a real number $x$, denote by $\lfloor x \rfloor$ the greatest integer no more than $x$, and by $\lceil x \rceil$ the least integer no less than $x$. We may denote the $n\times n$ identity matrix by ${\bf I}_n$.

We denote the \textit{neighbors} of vertex $u$ and the \textit{degree} of vertex $u$ in graph $G$ by $N_G(u)$ and $d_{G}(u)$, respectively. The \textit{distance} $d_{G}(u,v)$ between vertices $u$ and $v$ is the length of a shortest path between them in $G$ and the \textit{eccentricity} of vertex $u$ is defined as $e_{G}(u)=\max\{d_{G}(u, v)|v\in V(G)\}$. Then the \textit{diameter} of $G$, written as $diam(G)$, is
$
\max\{e_G(u)|u\in V(G)\}.
$
A \textit{diametrical path} is a path whose length is equal to the diameter of $G$.

Let $M(G)$ be an $n\times n$ matrix closely related to the structural theory of a graph $G$. Then the \textit{$M$-polynomial} of $G$ is defined as $\varphi_{M}(G,\lambda)=\det(\lambda I_n-M(G))$, and the roots of $\varphi_{M}(G,\lambda)=0$ are the \textit{$M$-eigenvalues}. The \textit{$M$-spectrum} $Spec_M(G)$ of $G$ is a multiset consisting of the distinct $M$-eigenvalues together with their multiplicities, in which the maximum modulus is called the \textit{$M$-spectral radius} of $G$. It is well-known that there are several classical graph matrices, including adjacency matrix, distance matrix, Laplacian matrix, signless Laplacian matrix, resistance matrix and so on.

Let $D(G)$ be the \textit{distance matrix} of $G$ with $(u,v)$-entry $(D(G))_{uv}=d_{G}(u,v)$. The \textit{eccentricity matrix} $\varepsilon(G)$ of $G$ is constructed from the distance matrix $D(G)$ by only retaining the eccentricities in each row and each column and setting the rest elements in the corresponding row and column to be zero. To be more precise, the $(u,v)$-entry of eccentricity matrix is defined as
$$
(\varepsilon(G))_{uv}=\left\{
                      \begin{array}{ll}
                        (D(G))_{uv}, & \hbox{if $(D(G))_{uv}=\min\{e_G(u),e_G(v)\}$;} \\[5pt]
                        0, & \hbox{otherwise.}
                      \end{array}
                      \right.
$$
It is obvious that $\varepsilon(G)$ is real and symmetric. Then the $\varepsilon$-eigenvalues of $G$ are real, denoted by $\varepsilon_1(G)\geqslant\varepsilon_2(G)\geqslant\cdots\geqslant\varepsilon_n(G)$. Randi\'{c} et al. \cite{9,10} defined so-called $D_{MAX}$ \textit{matrix}, which was renamed as the eccentricity matrix by Wang et al. \cite{11}. Furthermore, Dehmer and Shi \cite{6} studied the uniqueness of $D_{MAX}$-matrix. Recently, Wang et al. \cite{en} studied the graph energy based on the eccentricity matrix; Wang et al. \cite{e} studied some spectral properties of the eccentricity matrix of graphs; Mahato et al. \cite{s} studied the spectra of graphs based on the eccentricity matrix; Wei et al. \cite{w} determined the $n$-vertex trees with minimum $\varepsilon$-spectral radius. Furthermore, in \cite{w}, the authors identified all trees with given order and diameter having minimum $\varepsilon$-spectral; Tura et al. \cite{ar} studied the eccentricity energy of complete multipartite graphs.

Note that the adjacency matrix $A(G)$ can be regarded as constructed from the distance matrix $D(G)$ by selecting only the smallest distances for each row and each column, which correspond to adjacent vertices. From this point of view, the eccentricity matrix can be viewed as the opposite to the adjacency matrix \cite{11} and these two matrices express two extremes of distance-like matrix.

The adjacency and distance matrices have been extensively studied and applied; see \cite{1,3,4,5,ff,fc,fz,fm,hq,fd,zs,lm,zm,z1,z2}. One of the most important facts is that the adjacency and distance matrices of connected graphs are irreducible, but it does not hold for all eccentricity matrices. Let $T$ be a tree with at least two vertices. Recently, Wang et al. \cite{11} proved that the eccentricity matrix of $T$ is irreducible and they characterized the relationships between the $A$-eigenvalues and $\varepsilon$-eigenvalues of some graphs. Then $\varepsilon$-spectral radius $\varepsilon_1(T)$ is positive and there is an eigenvector corresponding to $\varepsilon_1(T)$, called \textit{Perron eigenvector}, whose each coordinate is positive by Perron-Frobenius Theorem. Let $M$ and $N$ be two matrices with same order. If $(N)_{ij}\leqslant (M)_{ij}$ for each $i,j$, we let $N\leqslant M$.

In view of more novel properties of eccentricity matrix, further discussion is needed. In particular, Wang et al. \cite{e} proposed the following problem.

\begin{pb}[\cite{e}]\label{conj1.1}
Which trees have the maximum $\varepsilon$-spectral radius?
\end{pb}

Recently, Wei et al. \cite{w} determined all connected graphs on $n$ vertices of maximum degree less than $n-1$, whose least eccentricity eigenvalues are in $[-2\sqrt{2}, -2]$. For tree $T$ with order $n\geq3$, it \cite{w} was proved that $\varepsilon_n(T)\leq-2$ with equality if and only if $T\cong S_n$.

Motivated by the above results, we now propose the following problem.
\begin{pb}
For some given number $c<-2\sqrt{2}$, which trees with least eccentricity eigenvalues are in $[c, -2\sqrt{2})$?
\end{pb}

In this paper, we characterize the extremal trees having maximum $\varepsilon$-spectral radius with given order and odd diameter. On the other hand, we determine all the trees  with least eccentricity eigenvalues in $[-2-\sqrt{13}, -2\sqrt{2})$.

Further on we need the following lemmas.
\begin{lem}[\cite{8}]\label{lem2.1}
Let $M$ be a Hermitian matrix of order $s$, and let $N$ be a principle submatrix of $M$ with order $t$. If $\lambda_1\geqslant \lambda_2\geqslant \cdots \geqslant \lambda_s$ list the eigenvalues of $M$ and $\mu_1\geqslant \mu_2\geqslant \cdots \geqslant \mu_t$ are the eigenvalues of $N$, then $\lambda_{i}\geqslant \mu_i\geqslant \lambda_{s-t+i}$ for $1\leqslant i\leqslant t$.
\end{lem}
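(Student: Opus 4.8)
The final statement is the Cauchy interlacing inequality, and the plan is to deduce it from the Courant--Fischer min--max characterisation of the eigenvalues of a Hermitian matrix. Write $R_M(x)=x^{*}Mx/x^{*}x$ for the Rayleigh quotient. Recall that if $M$ is Hermitian of order $s$ with eigenvalues $\lambda_1\geqslant\cdots\geqslant\lambda_s$, then for each $i$,
\[
\lambda_i=\max_{\dim U=i}\ \min_{0\neq x\in U} R_M(x)=\min_{\dim U=s-i+1}\ \max_{0\neq x\in U} R_M(x),
\]
where $U$ ranges over subspaces of $\mathbb{C}^s$. The first step is to record this variational fact (or cite it); the rest of the argument is bookkeeping with subspaces.

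Next I would set up the embedding. Since $N$ is a principal submatrix of $M$, it is the submatrix indexed by some set $J\subseteq\{1,\dots,s\}$ with $|J|=t$; let $W=\{x\in\mathbb{C}^{s}:x_k=0\text{ for all }k\notin J\}$, a $t$-dimensional subspace. If $x\in W$ has restriction $y\in\mathbb{C}^{t}$, then $x^{*}Mx=y^{*}Ny$ and $x^{*}x=y^{*}y$, so $R_M(x)=R_N(y)$; consequently the Courant--Fischer formulas for $N$ are exactly the same maxima and minima as above, but with the subspaces constrained to lie inside $W$. For the upper bound I would use the max--min form: $\mu_i$ equals the maximum of $\min_{0\neq x\in U}R_M(x)$ over $i$-dimensional $U\subseteq W$, whereas $\lambda_i$ is that maximum over all $i$-dimensional subspaces of $\mathbb{C}^{s}$; a maximum over a smaller family is no larger, so $\mu_i\leqslant\lambda_i$. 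For the lower bound I would use the min--max form: $\mu_i$ equals the minimum of $\max_{0\neq x\in U}R_M(x)$ over subspaces $U\subseteq W$ of dimension $t-i+1$, while setting $j=s-t+i$ (so that $s-j+1=t-i+1$) the Courant--Fischer formula identifies $\lambda_{s-t+i}$ as the minimum of the same quantity over \emph{all} subspaces of $\mathbb{C}^{s}$ of dimension $t-i+1$; a minimum over a smaller family is no smaller, so $\mu_i\geqslant\lambda_{s-t+i}$. Combining, $\lambda_i\geqslant\mu_i\geqslant\lambda_{s-t+i}$ for $1\leqslant i\leqslant t$.

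There is no serious obstacle here: the only points requiring care are the dimension arithmetic (matching $s-j+1$ with $t-i+1$ to get the index $s-t+i$) and the elementary but essential observation that restricting a max over subspaces to a sub-family can only decrease it while restricting a min can only increase it. An alternative, if one wishes to avoid Courant--Fischer, is induction on $s-t$, reducing to the case $t=s-1$ of deleting a single row and column, which can be settled by an orthogonality/dimension count among eigenvectors or by tracking sign changes of the characteristic polynomial; but the variational proof above is the shortest route.
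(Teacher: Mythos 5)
Your proof is correct: it is the standard derivation of the Cauchy interlacing theorem from the Courant--Fischer min--max characterization, and the dimension bookkeeping (identifying $s-j+1$ with $t-i+1$ via $j=s-t+i$) is handled properly, as is the key observation that the Rayleigh quotient of $M$ restricted to the coordinate subspace $W$ coincides with that of $N$. The paper does not prove this lemma at all --- it is quoted from a standard reference --- so there is no argument to compare against; your write-up supplies a complete and correct proof of the cited fact.
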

\begin{lem}[\cite{8}]\label{lem2.2}
Let $M$ and $N$ be two nonnegative irreducible matrices with same order. If $(N)_{ij}\leqslant (M)_{ij}$ for each $i,j$, then $\rho(N)\leqslant \rho(M)$ with equality if and only if $M=N$, where $\rho(N)$ and $\rho(M)$ denote the \textit{spectral radius} of $N$ and $M$, respectively.
\end{lem}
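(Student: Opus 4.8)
The statement is a classical fact from Perron--Frobenius theory, and the plan is to prove it by comparing Perron vectors of $M$ and $N$. First I would record the structural data supplied by the Perron--Frobenius theorem for nonnegative irreducible matrices: $\rho(M)$ is an eigenvalue of $M$ that admits a strictly positive left eigenvector $w>0$ with $w^{T}M=\rho(M)\,w^{T}$, and $\rho(N)$ is an eigenvalue of $N$ that admits a strictly positive right eigenvector $y>0$ with $Ny=\rho(N)\,y$.

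To obtain $\rho(N)\leqslant\rho(M)$, the key step is that the entrywise inequality $N\leqslant M$ together with $y>0$ forces $My\geqslant Ny=\rho(N)\,y$ coordinatewise. Pairing this with the left Perron vector $w>0$ of $M$ yields
\[
\rho(M)\,w^{T}y=w^{T}(My)\geqslant\rho(N)\,w^{T}y,
\]
and since $w>0$ and $y>0$ give $w^{T}y>0$, we may cancel $w^{T}y$ to conclude $\rho(M)\geqslant\rho(N)$.

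For the equality characterization, suppose $\rho(M)=\rho(N)=:\rho$. Then the displayed inequality must hold with equality, so $w^{T}(My-\rho y)=0$; as $w>0$ and $My-\rho y\geqslant 0$ coordinatewise, every coordinate of $My-\rho y$ vanishes, that is, $My=\rho y$. Combining with $Ny=\rho(N)\,y=\rho y$ gives $(M-N)y=0$, and since $M-N\geqslant 0$ entrywise while $y>0$, this forces $M-N=0$, i.e.\ $M=N$. The reverse implication is immediate.

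The one point that deserves care---and the only place where irreducibility is genuinely invoked---is the strict positivity $w>0$ of the left Perron vector of $M$; without it the deduction ``$w^{T}(My-\rho y)=0$ and $My-\rho y\geqslant 0$ imply $My=\rho y$'' would fail, since one could then only claim $w\geqslant 0$ with $w\neq 0$. So I would be careful to use irreducibility of $M$ precisely at that step, while irreducibility of $N$ is what guarantees $y>0$ at the outset; no other part of the argument presents a real difficulty.
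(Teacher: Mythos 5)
The paper states this lemma as a classical result cited from Minc's \emph{Nonnegative Matrices} and gives no proof of its own, so there is nothing internal to compare against. Your argument is correct and complete: it is the standard Perron--Frobenius comparison proof (pair a positive left Perron vector of $M$ with a positive right Perron vector of $N$, use $w^{T}y>0$ to get the inequality, and in the equality case push $w^{T}(My-\rho y)=0$ down to $My=\rho y$ and then $(M-N)y=0$ to force $M=N$), and you correctly locate where irreducibility of each matrix is needed.
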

\begin{lem}[\cite{11}]\label{lem2.3}
The eccentricity matrix $\varepsilon(T)$ of a tree $T$ with at least two vertices is irreducible.
\end{lem}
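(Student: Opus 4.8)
The plan is to reduce the statement to a connectivity assertion about an auxiliary graph. Since $\varepsilon(T)$ is real and symmetric, it is irreducible precisely when the graph $G_\varepsilon$ on vertex set $V(T)$ in which $u\sim v$ iff $(\varepsilon(T))_{uv}\neq 0$ is connected; by definition $(\varepsilon(T))_{uv}\neq 0$ iff $d_T(u,v)=\min\{e_T(u),e_T(v)\}$. So it suffices to show $G_\varepsilon$ is connected. I would anchor everything on a fixed diametrical path $P: v_0v_1\cdots v_d$ with $d=diam(T)$. First observe $e_T(v_0)=e_T(v_d)=d$, since each of these eccentricities is at least $d_T(v_0,v_d)=d$ and at most $diam(T)=d$; consequently $d_T(v_0,v_d)=d=\min\{e_T(v_0),e_T(v_d)\}$, so $(\varepsilon(T))_{v_0v_d}\neq 0$ and $v_0\sim v_d$ in $G_\varepsilon$.

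The crux is the classical fact that in a tree an endpoint of a diametrical path realizes the eccentricity of every vertex: for each $u\in V(T)$, $\max\{d_T(u,v_0),\,d_T(u,v_d)\}=e_T(u)$. I would prove this by letting $p$ be the vertex of $P$ nearest $u$, so that the $u$–$v_0$ and $u$–$v_d$ paths both pass through $p$, giving $d_T(u,v_i)=d_T(u,p)+d_T(p,v_i)$ for $i\in\{0,d\}$, while $d_T(p,v_0)+d_T(p,v_d)=d$. If some $z$ had $d_T(u,z)>\max\{d_T(u,v_0),d_T(u,v_d)\}$, then letting $w$ be the branch vertex where the $u$–$z$ path leaves the $u$–$p$ path and routing the $z$–$v_0$ (or $z$–$v_d$) path through $w$ and $p$ would produce a path of length exceeding $d$, contradicting that $d$ is the diameter. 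Granting this, fix $u$ and suppose without loss of generality that $d_T(u,v_0)=e_T(u)$. Since $e_T(v_0)=d\ge e_T(u)$, we get $\min\{e_T(u),e_T(v_0)\}=e_T(u)=d_T(u,v_0)$, hence $(\varepsilon(T))_{uv_0}\neq 0$ and $u\sim v_0$ in $G_\varepsilon$ whenever $u\neq v_0$ (symmetrically $u\sim v_d$ if the maximum is attained at $v_d$). Thus every vertex of $T$ is adjacent in $G_\varepsilon$ to $v_0$ or to $v_d$, and $v_0\sim v_d$; therefore $G_\varepsilon$ is connected and $\varepsilon(T)$ is irreducible.

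The main obstacle is the second step — the tree lemma that the eccentricity of an arbitrary vertex is attained at an endpoint of a fixed diametrical path — which is the only place a genuine (if standard) tree argument is needed, requiring the careful bookkeeping of projection and branch vertices described above. Once that is available, the equivalence "irreducible $\Longleftrightarrow$ $G_\varepsilon$ connected" and the elementary eccentricity comparisons finish the proof immediately; note also that the hypothesis that $T$ has at least two vertices is used exactly to guarantee $d\ge 1$, so that $v_0\neq v_d$ and $G_\varepsilon$ is nonempty.
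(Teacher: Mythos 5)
The paper does not prove this lemma at all --- it is imported verbatim from reference \cite{11} --- so there is no in-paper proof to compare against; judged on its own, your argument is correct and is essentially the standard (and, as far as the cited source goes, the original) one: reduce irreducibility of the symmetric matrix to connectivity of the nonzero-pattern graph, show $e_T(v_0)=e_T(v_d)=d$ so that $v_0\sim v_d$, and use the classical fact that in a tree every vertex's eccentricity is attained at an endpoint of any fixed diametrical path to conclude that $\{v_0,v_d\}$ dominates the whole graph, yielding a connected spanning double star. The only place to be careful is the sketch of that classical fact: your ``route through $w$ and $p$'' contradiction needs the right choice of endpoint, namely if the projection of the eccentric vertex $z$ onto $P$ lies on the $v_d$-side of the projection $p$ of $u$, you must compare against $d_T(u,v_d)$ and derive $d_T(z,v_d)>d$ (and symmetrically for the $v_0$-side); with that bookkeeping spelled out the proof is complete.
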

\begin{lem}[\cite{w}]\label{lem2.4}
Let $G$ be an $n$-vertex connected graph with diameter $d$. Then $\varepsilon_1(G)\geqslant d$ and $\varepsilon_n(G)\leqslant -d$.
\end{lem}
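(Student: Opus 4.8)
The plan is to produce a $2\times 2$ principal submatrix of $\varepsilon(G)$ whose two eigenvalues are exactly $d$ and $-d$, and then read off the bounds from the interlacing inequality of Lemma~\ref{lem2.1}. (If $n=1$ then $d=0$ and the statement is trivial, so assume $n\geqslant 2$.)

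First I would fix a diametrical path of $G$ and call its endpoints $u$ and $v$, so that $d_G(u,v)=d$. The elementary but crucial point is that $e_G(u)=e_G(v)=d$: indeed $e_G(u)\geqslant d_G(u,v)=d$ while $e_G(u)\leqslant \mathrm{diam}(G)=d$, and symmetrically for $v$. Hence $d_G(u,v)=d=\min\{e_G(u),e_G(v)\}$, so by the very definition of the eccentricity matrix this entry is retained: $(\varepsilon(G))_{uv}=(\varepsilon(G))_{vu}=d$. Moreover every diagonal entry of $\varepsilon(G)$ is $0$, since $d_G(w,w)=0\neq e_G(w)$ for each vertex $w$ of a graph on at least two vertices. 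Therefore the principal submatrix $N$ of $M:=\varepsilon(G)$ indexed by $\{u,v\}$ equals $\left(\begin{smallmatrix}0&d\\ d&0\end{smallmatrix}\right)$, whose eigenvalues are $\mu_1=d\geqslant\mu_2=-d$.

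Finally I would invoke Lemma~\ref{lem2.1} with $s=n$ and $t=2$: the inequality $\lambda_1\geqslant\mu_1$ yields $\varepsilon_1(G)\geqslant d$, and the inequality $\mu_2\geqslant\lambda_{s-t+2}=\lambda_n$ yields $-d\geqslant\varepsilon_n(G)$, i.e. $\varepsilon_n(G)\leqslant -d$. I do not expect any genuine obstacle here; the only step that requires a moment's attention is checking that the entry attached to the two ends of a diametrical path survives the passage from $D(G)$ to $\varepsilon(G)$, which is exactly the observation $e_G(u)=e_G(v)=d$ recorded above.
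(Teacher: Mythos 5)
Your proof is correct: the paper imports this lemma from \cite{w} without reproving it, and your argument (extract the $2\times2$ principal submatrix $\bigl(\begin{smallmatrix}0&d\\ d&0\end{smallmatrix}\bigr)$ indexed by the two ends of a diametrical path, after checking that this entry survives in $\varepsilon(G)$ because both endpoints have eccentricity exactly $d$, then apply the interlacing of Lemma~\ref{lem2.1}) is exactly the standard proof given in that reference. No gaps; the key observation $e_G(u)=e_G(v)=d$ and the index bookkeeping $\mu_2\geqslant\lambda_n$ are both handled correctly.
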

\begin{lem}[\cite{w}]\label{lem2.5}
Let $T$ be a tree with order $n\geq3$. Then $\varepsilon_n(T)\leqslant -2$ with equality if and only if $T\cong S_n$.
\end{lem}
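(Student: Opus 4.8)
The plan is to get the inequality $\varepsilon_n(T)\le -2$ for free from Lemma~\ref{lem2.4}, and to settle the equality case by a short structural step together with an explicit computation of the $\varepsilon$-spectrum of $S_n$. For the inequality, observe that $n\ge 3$ forces $T$ to contain $P_3$ as a subgraph, so $diam(T)\ge 2$; applying Lemma~\ref{lem2.4} with $d=diam(T)$ then yields $\varepsilon_n(T)\le -diam(T)\le -2$.

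For the ``only if'' direction of the equality statement, I would note that $\varepsilon_n(T)=-2$ together with Lemma~\ref{lem2.4} forces $-diam(T)\ge -2$, i.e. $diam(T)\le 2$; combined with $diam(T)\ge 2$ this gives $diam(T)=2$, and since a tree of diameter $2$ must be a star, $T\cong S_n$ follows. For the ``if'' direction, the plan is to compute $\varepsilon(S_n)$ directly. With center $c$ and leaves $v_1,\dots,v_{n-1}$ one has $e_{S_n}(c)=1$ and $e_{S_n}(v_i)=2$, so the center--leaf distances $1=\min\{1,2\}$ and the leaf--leaf distances $2=\min\{2,2\}$ are all retained: $\varepsilon(S_n)$ has zero diagonal, entry $1$ between $c$ and every leaf, and entry $2$ between every two distinct leaves. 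Every vector supported on the leaves and orthogonal to the all-ones vector is then an eigenvector of $\varepsilon(S_n)$ for the eigenvalue $-2$, which accounts for multiplicity at least $n-2\ (\ge 1)$; its $2$-dimensional orthogonal complement, spanned by $e_c$ and the leaf all-ones vector, is invariant and carries the quotient matrix $\begin{pmatrix}0 & n-1\\ 1 & 2(n-2)\end{pmatrix}$, so the two remaining eigenvalues are the roots of $\lambda^2-2(n-2)\lambda-(n-1)$. Their product is $-(n-1)<0$, so exactly one of them is negative, and that one equals $(n-2)-\sqrt{(n-2)^2+(n-1)}$, which exceeds $-2$ precisely because $n>1$. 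Hence $\varepsilon_n(S_n)=-2$, completing the characterization.

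The structural steps are immediate once Lemma~\ref{lem2.4} is in hand, so I expect the only real work — and the mild obstacle — to be the spectral bookkeeping for $S_n$: identifying the $(n-2)$-fold eigenvalue $-2$ via the leaf eigenspace, and checking that neither eigenvalue coming from the $2$-dimensional invariant subspace drops below $-2$, which reduces to the elementary inequality $n>1$ used above.
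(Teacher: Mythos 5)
The paper does not prove this lemma --- it is quoted verbatim from Wei et al.\ \cite{w} --- so there is no internal proof to compare against. Judged on its own, your argument is correct and complete. The inequality and the ``only if'' direction follow exactly as you say from Lemma~\ref{lem2.4} together with the observations that $n\ge 3$ forces $diam(T)\ge 2$ and that a tree of diameter $2$ is a star. Your computation of $\varepsilon(S_n)$ is also right: the center has eccentricity $1$ and each leaf has eccentricity $2$, so all entries of $D(S_n)$ survive and $\varepsilon(S_n)=\left(\begin{smallmatrix}0&\mathbf{1}^{\top}\\ \mathbf{1}&2(\mathbf{J}-\mathbf{I})\end{smallmatrix}\right)$; the leaf-supported vectors orthogonal to $\mathbf{1}$ give the eigenvalue $-2$ with multiplicity $n-2\ge 1$, the invariant complement yields $\lambda^{2}-2(n-2)\lambda-(n-1)$, and its negative root $(n-2)-\sqrt{(n-2)^{2}+(n-1)}$ exceeds $-2$ iff $3n>3$. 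One stylistic remark: your quotient matrix $\left(\begin{smallmatrix}0&n-1\\ 1&2(n-2)\end{smallmatrix}\right)$ is written in a non-orthonormal basis and is not symmetric, but since it is similar to the symmetric restriction its eigenvalues are the right ones, so the bookkeeping is sound.
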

\section{The maximum $\varepsilon$-spectral radius of trees with fixed odd diameter}

In this section, we characterize the extremal trees with fixed odd diameter having maximum $\varepsilon$-spectral radius. Firstly, we present a few technical lemmas aiming to provide some fundamental characterizations of extremal trees.

Denote by $\mathscr{T}_{n,d}$ the set of trees with order $n$ and diameter $d$. It is easy to check that the tree with diameter 1 is $K_2$ and the tree with diameter 2 is a star with at least 3 vertices.

If $d\geqslant3$ is odd, let $D_{n,d}^{a,b}$ be the tree obtained from $P_{d+1}=v_0v_1v_2 \cdots v_d$ by attaching $a$ pendant vertices to $v_1$ and $b$ pendant vertices to $v_{d-1}$, where $a+b=n-d-1$ and $b\geqslant a\geqslant 0$, as depicted in Fig. 1.

\begin{figure}[!ht]
\begin{center}
\psfrag{a}{$v_0$}
\psfrag{b}{$v_1$}
\psfrag{c}{$v_{2}$}
\psfrag{f}{$v_{d-2}$}
\psfrag{g}{$v_{d-1}$}
\psfrag{h}{$v_d$}
\psfrag{i}{$u_1$}
\psfrag{j}{$u_a$}
\psfrag{k}{$w_1$}
\psfrag{l}{$w_b$}
\psfrag{2}{$\cdots$}
\psfrag{1}{$D_{n,d}^{a,b}$}
\includegraphics[width=90mm]{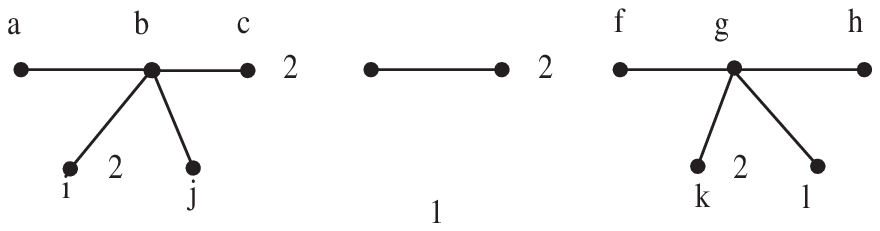} \\
\caption{Tree $D_{n,d}^{a,b}$.}
\end{center}
\end{figure}

\begin{lem}\label{lem3.1}
Let $D_{n,3}^{a,b}$ be in $\mathscr{T}_{n,3}$ defined above, where $a+b=n-4$ and $b\geqslant a\geqslant 1$. Then $\varepsilon_1(D_{n,3}^{a-1,b+1})< \varepsilon_1(D_{n,3}^{a,b})$.
\end{lem}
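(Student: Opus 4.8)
The plan is to first recognise that $D_{n,3}^{a,b}$ is, up to relabelling, a double star: $v_1$ and $v_2$ are adjacent, $v_1$ carries the $a+1$ leaves $v_0,u_1,\dots,u_a$, and $v_2$ carries the $b+1$ leaves $v_3,w_1,\dots,w_b$. A short computation gives $e(v_1)=e(v_2)=2$ while every leaf has eccentricity $3$, so the only nonzero entries of $\varepsilon(D_{n,3}^{a,b})$ are $3$ between a leaf at $v_1$ and a leaf at $v_2$, $2$ between $v_2$ and a leaf at $v_1$, and $2$ between $v_1$ and a leaf at $v_2$. Writing $A$ for the set of leaves at $v_1$ and $B$ for the set of leaves at $v_2$, so that $p:=|A|=a+1$ and $q:=|B|=b+1$, we obtain, in block form with respect to the ordered partition $(A,B,\{v_1\},\{v_2\})$ (block sizes $p,q,1,1$),
\[
\varepsilon(D_{n,3}^{a,b})=\begin{pmatrix} O & 3J & \mathbf 0 & 2\mathbf 1\\ 3J & O & 2\mathbf 1 & \mathbf 0\\ \mathbf 0^{\top} & 2\mathbf 1^{\top} & 0 & 0\\ 2\mathbf 1^{\top} & \mathbf 0^{\top} & 0 & 0\end{pmatrix}.
\]

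The next step uses that this partition is equitable. Since $\varepsilon(D_{n,3}^{a,b})$ is irreducible (Lemma~\ref{lem2.3}), Perron--Frobenius gives a positive Perron eigenvector for $\varepsilon_1:=\varepsilon_1(D_{n,3}^{a,b})$; by uniqueness it is invariant under the leaf-swapping automorphisms, hence constant on $A$ and on $B$. Thus $\varepsilon_1$ is an eigenvalue of the $4\times4$ quotient matrix $Q$, and writing $x,y,s,t$ for the common coordinates on $A,B,v_1,v_2$ and $\lambda=\varepsilon_1>0$, the eigen-equations read $\lambda x=3qy+2t$, $\lambda y=3px+2s$, $\lambda s=2qy$, $\lambda t=2px$. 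Eliminating $s,t$ gives $(\lambda^2-4p)x=3q\lambda y$ and $(\lambda^2-4q)y=3p\lambda x$, and multiplying these (all coordinates being positive) yields, with $\mu:=\lambda^2$,
\[
\mu^{2}-\bigl(4(p+q)+9pq\bigr)\mu+16pq=0 .
\]
This quadratic has two positive roots $\mu_+>\mu_->0$, and since substituting $\mu=\lambda^{2}$ turns it into the characteristic polynomial of $Q$ (indeed $\det(\lambda I-Q)=\lambda^{4}-(4(p+q)+9pq)\lambda^{2}+16pq$), the eigenvalues of $Q$ are $\pm\sqrt{\mu_+},\pm\sqrt{\mu_-}$; being the largest eigenvalue, $\varepsilon_1=\sqrt{\mu_+}$.

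Finally, $p+q=n-2$ is the same for $D_{n,3}^{a,b}$ and $D_{n,3}^{a-1,b+1}$, whereas $pq$ strictly decreases — from $(a+1)(b+1)$ to $a(b+2)$, a drop of $b-a+1\ge1$ since $b\ge a$. So it suffices to show that, with $S:=p+q$ fixed, the larger root $\mu_+$ of $g_P(\mu):=\mu^{2}-(4S+9P)\mu+16P$ is strictly increasing in $P$. For $P_1<P_2$ with respective larger roots $\mu_+^{(1)},\mu_+^{(2)}$, substituting $\mu_+^{(1)}$ into $g_{P_2}$ and using $g_{P_1}(\mu_+^{(1)})=0$ gives $g_{P_2}\bigl(\mu_+^{(1)}\bigr)=(P_2-P_1)\bigl(16-9\mu_+^{(1)}\bigr)$, which is negative because $\mu_+^{(1)}=\varepsilon_1(D_{n,3}^{a-1,b+1})^{2}\ge 3^{2}$ by Lemma~\ref{lem2.4} (the diameter is $3$). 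Since $g_{P_2}$ opens upward, $\mu_+^{(1)}$ lies strictly below $\mu_+^{(2)}$, i.e.\ $\varepsilon_1(D_{n,3}^{a-1,b+1})^{2}<\varepsilon_1(D_{n,3}^{a,b})^{2}$, and the positivity of both spectral radii yields the claim.

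The most delicate points are in the middle paragraph: verifying that the Perron eigenvector is genuinely constant on $A$ and on $B$, and confirming (via the characteristic polynomial of $Q$) that $\varepsilon_1^2$ is the \emph{larger} root $\mu_+$ and not $\mu_-$. One should also dispatch the boundary case $a=1$, where $D_{n,3}^{a-1,b+1}=D_{n,3}^{0,b+1}$ has $|A|=1$: the eccentricity computation and the fact that the diameter is still $3$ are unaffected, so the quotient-matrix analysis and the final comparison carry over verbatim.
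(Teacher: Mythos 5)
Your proposal is correct and follows essentially the same route as the paper: reduce to the $4\times4$ quotient matrix via the Perron eigenvector being constant on the two leaf orbits, compute the characteristic polynomial, and compare the largest roots by evaluating one polynomial at the other's root, using $\varepsilon_1\geq 3$ from Lemma~\ref{lem2.4}. Your repackaging of the quartic as a quadratic in $\mu=\lambda^2$ with $p+q$ fixed and $pq$ varying is only a cosmetic simplification — your identity $g_{P_2}(\mu_+^{(1)})=(P_2-P_1)(16-9\mu_+^{(1)})$ is literally the paper's $f_a(\widehat{\rho})-f_{a-1}(\widehat{\rho})=(n-3-2a)(-9\widehat{\rho}^2+16)$.
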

\begin{proof}
Choose a diametrical path $P=v_0v_1v_2v_3$ in $D_{n,3}^{a,b}$. Denote by $U$ the set of pendant neighbors of $v_1$ and let $W$ be the set of pendant neighbors of $v_2$ in $D_{n,3}^{a,b}$. It is obvious that $v_0\in U, v_3\in W$ and $|U|=a+1, |W|=b+1$.

Let ${\bf x}$ be a Perron eigenvector corresponding to $\rho:=\varepsilon_1(D_{n,3}^{a,b})$, whose coordinate with respect to vertex $v$ is ${\bf x}_{v}$. Since $\rho {\bf x}_u=2{\bf x}_{v_2}+3\sum_{w\in V_2}{\bf x}_w$ for each $u\in U$, we can get ${\bf x}_u={\bf x}_{u'}$ for $\{u,u'\}\subseteq U$. Similarly, ${\bf x}_w={\bf x}_{w'}$ for $\{w,w'\}\subseteq W$. Thus, we obtain
\begin{align*}
&\rho {\bf x}_{v_1}=2(b+1){\bf x}_w; \\
&\rho {\bf x}_{v_2}=2(a+1){\bf x}_u; \\
&\rho {\bf x}_u=2{\bf x}_{v_2}+3(b+1){\bf x}_w; \\
&\rho {\bf x}_w=3(a+1){\bf x}_u+2{\bf x}_{v_1}
\end{align*}
for any $u\in U,w\in W$. Then $\rho$ is the largest eigenvalue of
\begin{equation*}
\left(
  \begin{array}{cccc}
  0 & 0 & 0      & 2(b+1) \\
  0 & 0 & 2(a+1) & 0 \\
  0 & 2 & 0      & 3(b+1) \\
  2 & 0 & 3(a+1) & 0 \\
  \end{array}
  \right).
\end{equation*}
By calculation and the fact $b=n-4-a$, we have $\rho$ is the largest root of $f_a(\lambda)=0$ where
\begin{align}\label{3.1}
f_a(\lambda)&:=\left|\begin{array}{cccc}
  \lambda & 0       & 0       & -2(b+1) \\
  0       & \lambda & -2(a+1) & 0 \\
  0       & -2      & \lambda & -3(b+1) \\
  -2      & 0       & -3(a+1) & \lambda \\
               \end{array}\right| \notag \\
&=\lambda^4+(9a^2+36a-9na-13n+35)\lambda^2-16a^2-64a+16na+16n-48.
\end{align}

Let $\widehat{\rho}$ be the largest root of $f_{a-1}(\lambda)=0$, then $\varepsilon_1(D_{n,3}^{a-1,b+1})=\widehat{\rho}\geqslant diam(D_{n,3}^{a-1,b+1})=3$ by Lemma \ref{lem2.4}. Hence, we have
\begin{align*}
f_a(\widehat{\rho})=&f_a(\widehat{\rho})-f_{a-1}(\widehat{\rho})=(n-3-2a)(-9\widehat{\rho}^2+16)\\[5pt]
                    \leqslant&(n-3-2a)(-9\cdot3^2+16)=-65(n-3-2a).
\end{align*}
Note that $n-3>n-4=a+b\geqslant2a$ due to $b\geqslant a$. Therefore, $f_a(\widehat{\rho})<0$ then $\varepsilon_1(D_{n,3}^{a-1,b+1})=\widehat{\rho}<\rho= \varepsilon_1(D_{n,3}^{a,b})$.
\end{proof}

Our first main result in this section determines the unique tree among $\mathscr{T}_{n,3}$, having the maximum $\varepsilon$-spectral radius.
\begin{thm}\label{thm3.2}
The maximum $\varepsilon$-spectral radius is achieved uniquely by tree $D_{n,3}^{\lfloor \frac{n-4}{2}\rfloor, \lceil \frac{n-4}{2}\rceil}$ among all the trees in $\mathscr{T}_{n,3}$.
\end{thm}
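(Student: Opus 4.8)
The plan is to deduce Theorem~\ref{thm3.2} from Lemma~\ref{lem3.1} after first classifying the members of $\mathscr{T}_{n,3}$. For the classification I claim $\mathscr{T}_{n,3}=\{D_{n,3}^{a,b}:a+b=n-4,\ b\geqslant a\geqslant 0\}$. To see this, let $T\in\mathscr{T}_{n,3}$ and fix a diametrical path $v_0v_1v_2v_3$. Every vertex $w\notin\{v_1,v_2\}$ lies in the component of $v_1$ or in the component of $v_2$ of $T-v_1v_2$ (and $v_0,v_3$ lie in these two components respectively). If $w$ lies in the $v_1$-component, the $w$--$v_3$ path runs through $v_1$ and $v_2$, so $d_T(w,v_1)+2=d_T(w,v_3)\leqslant diam(T)=3$, whence $d_T(w,v_1)=1$; symmetrically, a vertex in the $v_2$-component is adjacent to $v_2$. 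Hence every vertex other than $v_1,v_2$ is pendant at $v_1$ or at $v_2$, so $T=D_{n,3}^{a,b}$ for some $a+b=n-4$, and after relabelling $b\geqslant a\geqslant 0$.

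Next I would run a balancing argument built on Lemma~\ref{lem3.1}. Write $a^{*}=\lfloor\tfrac{n-4}{2}\rfloor$ and $b^{*}=\lceil\tfrac{n-4}{2}\rceil$. For any admissible pair $(a,b)$ with $a<a^{*}$ we have $b=n-4-a\geqslant a+2$, so the parameters of $D_{n,3}^{a+1,b-1}$ satisfy $b-1\geqslant a+1\geqslant 1$; applying Lemma~\ref{lem3.1} to $D_{n,3}^{a+1,b-1}$ therefore gives $\varepsilon_1(D_{n,3}^{a,b})<\varepsilon_1(D_{n,3}^{a+1,b-1})$. Iterating this step until the two pendant counts differ by at most one yields
\[
\varepsilon_1(D_{n,3}^{a,b})<\varepsilon_1(D_{n,3}^{a+1,b-1})<\cdots<\varepsilon_1(D_{n,3}^{a^{*},b^{*}}).
\]

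Combining the two steps: by the classification the trees occurring in the last display exhaust $\mathscr{T}_{n,3}$, and every one of them except $D_{n,3}^{a^{*},b^{*}}$ has strictly smaller $\varepsilon$-spectral radius; since $\mathscr{T}_{n,3}$ is finite, the maximum is attained, and only at $D_{n,3}^{a^{*},b^{*}}=D_{n,3}^{\lfloor(n-4)/2\rfloor,\lceil(n-4)/2\rceil}$, which is the assertion. All of the genuine work is already packaged inside Lemma~\ref{lem3.1}, so I do not expect a serious obstacle here; the only points demanding a moment's care are the structural classification of the first paragraph (that diameter exactly $3$ forces the double-star shape) and the degenerate ranges $n\in\{4,5\}$, where $\mathscr{T}_{n,3}$ already consists of a single tree and the statement is immediate.
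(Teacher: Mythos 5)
Your proposal is correct and follows essentially the same route as the paper, which simply invokes Lemma~\ref{lem3.1} and notes that balancing maximizes the spectral radius. You in fact supply two details the paper leaves implicit --- the verification that every tree of diameter $3$ is a double star $D_{n,3}^{a,b}$, and the explicit check that the hypotheses $b-1\geqslant a+1\geqslant 1$ of Lemma~\ref{lem3.1} hold at each step of the balancing chain --- both of which are sound.
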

\begin{proof}
According to Lemma \ref{lem3.1}, it is easy to see that the maximum $\varepsilon$-spectral radius is achieved uniquely by tree $D_{n,3}^{a,b}$ satisfying $|b-a|\leq1$ among all the trees in $\mathscr{T}_{n,3}$.

This completes the proof.
\end{proof}
\begin{figure}[!ht]
\begin{center}
\psfrag{a}{$v_0$}
\psfrag{b}{$v_1$}
\psfrag{c}{$v_{i-1}$}
\psfrag{d}{$v_i$}
\psfrag{e}{$v_{i+1}$}
\psfrag{f}{$v_{d-1}$}
\psfrag{g}{$v_d$}
\psfrag{h}{$u_{0}$}
\psfrag{i}{$u$}
\psfrag{j}{$u_1$}
\psfrag{7}{$u_2$}
\psfrag{k}{$u_s$}
\psfrag{1}{$T_1$}
\psfrag{2}{$T_{i-1}$}
\psfrag{3}{$T_{i}$}
\psfrag{4}{$T_{i+1}$}
\psfrag{5}{$T_{d-1}$}
\psfrag{6}{$\widetilde{T}_{i}$}
\psfrag{x}{$T$}
\psfrag{y}{$\widetilde{T}$}
\psfrag{0}{$\cdots$}
\includegraphics[width=150mm]{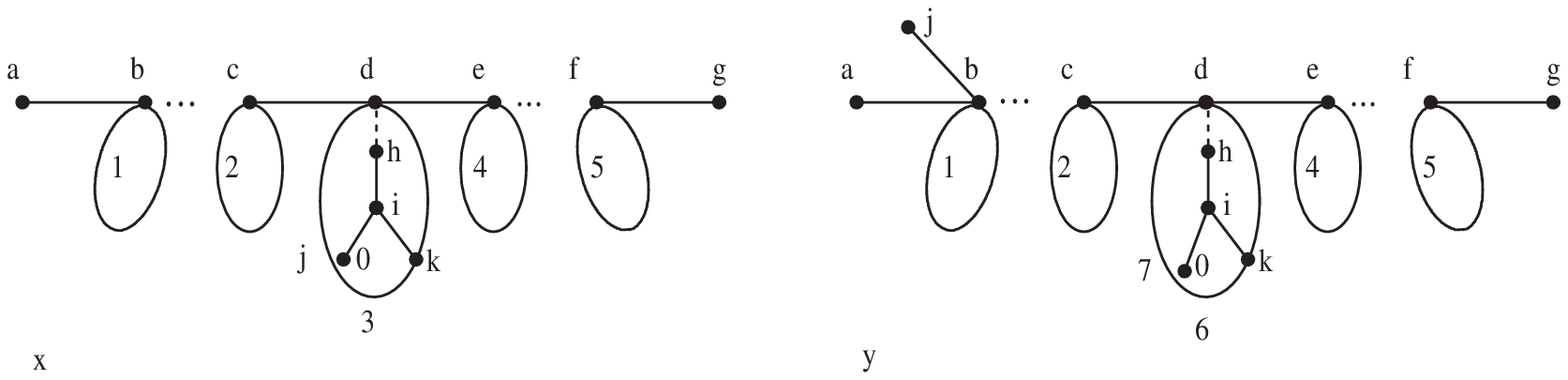} \\
\caption{Trees $T$ and $\widetilde{T}$ in Lemma \ref{lem3.3}.}
\end{center}
\end{figure}
\begin{lem}\label{lem3.3}
Let $T$ be in $\mathscr{T}_{n,d}$ with a diametrical path $P_{d+1}=v_0v_1v_2 \cdots v_d$ ($d\geqslant 5$ is odd), and let $T_j$ be the connected component of $T-E(P_{d+1})$ containing $v_j, j\in\{0,1,\ldots, d\}$. Assume there exists a vertex $u_1\in V(T_i)$ ($2\leq i\leq \frac{d-1}{2}$) such that $d_{T_i}(v_i,u_1)=e_{T_i}(v_i)\geqslant 2$ (obviously, $u_1$ is a pendant vertex). Denote the unique neighbor of $u_1$ by $u$ and all neighbors of $u$ by $u_{0},u_1,\ldots,u_s$ with $d_T(u_{0})\geqslant 2$ and $d_T(u_{j})=1$ for $1\leqslant j\leqslant s$  (see Fig. $2$). Let
$$
\widetilde{T}=T-uu_1+u_1v_1.
$$
Then $\varepsilon_1(T)\leq\varepsilon_1(\widetilde{T})$, with equality if and only if $d_{T_i}(v_i,u_1)=e_{T_i}(v_i)=i$.
\end{lem}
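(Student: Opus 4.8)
The plan is to compare the eccentricity matrices $\varepsilon(T)$ and $\varepsilon(\widetilde{T})$ \emph{entrywise} and then invoke Lemma~\ref{lem2.2}: for a tree the eccentricity matrix is nonnegative (its entries are distances or $0$) and irreducible (Lemma~\ref{lem2.3}), so its spectral radius equals its largest eigenvalue $\varepsilon_1$, and an entrywise domination will give the asserted inequality. Put $\delta:=d_{T_i}(v_i,u_1)=e_{T_i}(v_i)$. Since $v_0v_1\cdots v_d$ is a diametrical path of $T$, the path from $v_0$ to $v_d$ through $v_i$, concatenated with the branch from $v_i$ to $u_1$, has length $\delta+(d-i)\leqslant d$, which together with $i\leqslant\frac{d-1}{2}$ forces $2\leqslant\delta\leqslant i$. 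Throughout I would use the elementary observation that in a tree with diametrical path $v_0\cdots v_d$ one has $e(z)=\max\{d(z,v_0),d(z,v_d)\}$ for every vertex $z$ (indeed each branch $T_j$ satisfies $e_{T_j}(v_j)\leqslant\min\{j,d-j\}$, so no vertex can be farther from $z$ than the appropriate endpoint of the path), together with the additivity of distances through the projection of $z$ onto $v_0\cdots v_d$.

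Next I would dispose of the easy structural facts. Moving $u_1$ changes only the distances incident to $u_1$; since $e_T(v_1)=d-1$ (by the observation applied in $T$), we get $d_{\widetilde T}(u_1,x)=d_{\widetilde T}(v_1,x)+1\leqslant d$ for every $x$, so $diam(\widetilde T)=d$ and $v_0\cdots v_d$ is still a diametrical path of $\widetilde T$. Hence the observation applies to both trees, and because $d(z,v_0)$ and $d(z,v_d)$ are the same in $T$ and $\widetilde T$ for all $z\ne u_1$, we obtain $e_{\widetilde T}(z)=e_T(z)$ for all $z\ne u_1$, while $e_T(u_1)=\delta+(d-i)\leqslant d=e_{\widetilde T}(u_1)$, with equality iff $\delta=i$. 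Combining this with the fact that distances between vertices other than $u_1$ are unchanged, $\varepsilon(T)$ and $\varepsilon(\widetilde T)$ coincide outside the row and column indexed by $u_1$. So everything reduces to proving that $(\varepsilon(\widetilde T))_{u_1,z}\geqslant(\varepsilon(T))_{u_1,z}$ for every $z\ne u_1$, with all these being equalities precisely when $\delta=i$.

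This last step is the technical heart of the argument, and the part I expect to be the main obstacle. Assume $(\varepsilon(T))_{u_1,z}=d_T(u_1,z)=\min\{e_T(u_1),e_T(z)\}>0$ and let $v_k$ be the vertex of $v_0\cdots v_d$ closest to $z$. Using $\delta\leqslant i\leqslant\frac{d-1}{2}$ I would first show $z\notin T_i$: inside $T_i$ the distance from $u_1$ is at most $2\delta<\delta+(d-i)=e_T(u_1)$ and is also strictly below $e_T(z)$, so no vertex of $T_i$ can realise a nonzero entry. Consequently $d_T(u_1,z)=\delta+d_T(v_i,z)$, and a short computation with $d_T(v_i,z)=|i-k|+d_T(v_k,z)$ forces $k\geqslant\frac{d+1}{2}$, and moreover either $d_T(u_1,z)=e_T(z)$ (which forces $\delta=i$) or $d_T(u_1,z)=e_T(u_1)<e_T(z)$ together with $d_T(v_k,z)=d-k$ (so $e_T(z)=d$). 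In every case $d_{\widetilde T}(u_1,z)=d_{\widetilde T}(v_1,z)+1=k+d_T(v_k,z)=e_T(z)=\min\{e_{\widetilde T}(u_1),e_{\widetilde T}(z)\}$, hence $(\varepsilon(\widetilde T))_{u_1,z}=e_T(z)\geqslant\min\{e_T(u_1),e_T(z)\}=(\varepsilon(T))_{u_1,z}$; this is strict exactly when $\delta<i$ and $d_T(u_1,z)=e_T(u_1)$, for instance $(\varepsilon(\widetilde T))_{u_1,v_d}=d>\delta+(d-i)=(\varepsilon(T))_{u_1,v_d}$. The same bookkeeping shows that when $\delta=i$ a position $z$ contributes a nonzero entry to row $u_1$ of $\varepsilon(\widetilde T)$ if and only if it does so to $\varepsilon(T)$, and the two entries then agree, so $\varepsilon(T)=\varepsilon(\widetilde T)$ in that case.

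Finally I would assemble the pieces: $\varepsilon(T)\leqslant\varepsilon(\widetilde T)$ entrywise, both matrices are nonnegative and irreducible, so Lemma~\ref{lem2.2} gives $\varepsilon_1(T)=\rho(\varepsilon(T))\leqslant\rho(\varepsilon(\widetilde T))=\varepsilon_1(\widetilde T)$ with equality if and only if $\varepsilon(T)=\varepsilon(\widetilde T)$; by the row-$u_1$ analysis this happens if and only if $\delta=i$, that is, $d_{T_i}(v_i,u_1)=e_{T_i}(v_i)=i$, which is exactly the stated equality condition. The delicate point is the computation in the previous paragraph --- checking that the $T_i$-case is vacuous and that the projection $v_k$ of any contributing vertex is forced into the far half $k\geqslant\frac{d+1}{2}$; once the diametrical-path characterisation of eccentricity is in hand, these reduce to short arithmetic arguments driven solely by the constraint $\delta\leqslant i\leqslant\frac{d-1}{2}$.
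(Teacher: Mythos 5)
Your proposal is correct and follows essentially the same route as the paper: an entrywise comparison of $\varepsilon(T)$ and $\varepsilon(\widetilde{T})$ reduced to the row and column of $u_1$ (entries toward the near half vanish in both matrices, entries toward the far half of $\varepsilon(\widetilde{T})$ dominate those of $\varepsilon(T)$), followed by an application of Lemmas \ref{lem2.2} and \ref{lem2.3}. The only organizational difference is that the paper splits globally into the cases $e_{T_i}(v_i)<i$ and $e_{T_i}(v_i)=i$, whereas you classify which vertices $z$ can contribute a nonzero entry in row $u_1$; the underlying computations coincide.
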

\begin{proof}
It is easy to check that $e_T(w)=e_{\widetilde{T}}(w)$ for each vertex $w\in V(T)\setminus\{u_1\}$, and $d_T(w,w')=d_{\widetilde{T}}(w,w')$ for $\{w,w'\}\subseteq V(T)\setminus\{u_1\}$. By the definition of eccentricity matrix, the $(w,w')$-entry of $\varepsilon(T)$ is equal to the $(w,w')$-entry of $\varepsilon(\widetilde{T})$ for each $\{w,w'\}\subseteq V(T)\setminus\{u_1\}$.

For $w\in \bigcup_{0\leq j\leq\frac{d-1}{2}}V(T_j)$, note that
\begin{align*}
&\text{$d_{T}(u_1, w)\leq d_{T}(u_1, v_{\frac{d-1}{2}})+d_{T}(w, v_{\frac{d-1}{2}})<\min\{e_T(w), e_T(u_1)\}$,}\\[5pt]
&\text{$d_{\widetilde{T}}(u_1, w)\leq d_{\widetilde{T}}(u_1, v_{\frac{d-1}{2}})+d_{\widetilde{T}}(w, v_{\frac{d-1}{2}})<\min\{e_{\widetilde{T}}(w), e_{\widetilde{T}}(u_1)\}.$}
\end{align*}
Hence, $(\varepsilon(T))_{u_1w}=0=(\varepsilon(\widetilde{T}))_{u_1w}$ for $w\in \bigcup_{0\leq j\leq\frac{d-1}{2}}V(T_j)$.

We proceed to consider the following two possible cases.
\begin{Case}
$e_{T_i}(v_i)<i$
\end{Case}
In this case, for $w\in \bigcup_{\frac{d+1}{2} \leq j\leq d}V(T_j)$, we have
\begin{align*}
&\text{$e_T(u_1)=d_{T}(u_1, v_d)\geq d_{T}(u_1, w)$,}\\[5pt]
&\text{$e_T(w)=d_{T}(v_0, w)>d_{T}(u_1, w).$}
\end{align*}
By the definition of eccentricity matrix, if $d_{T}(u_1, v_d)>d_{T}(u_1, w)$, then $(\varepsilon(T))_{u_1w}=0$. If $d_{T}(u_1, v_d)=d_{T}(u_1, w)$, then $(\varepsilon(T))_{u_1w}=d_{T}(u_1, w)$. Hence, we have $(\varepsilon(T))_{u_1w}\leq d_{T}(u_1, w)$.

On the other hand, for $w\in \bigcup_{\frac{d+1}{2} \leq j\leq d}V(T_j)$, we have
\begin{align*}
&\text{$e_{\widetilde{T}}(u_1)=d_{\widetilde{T}}(u_1, v_d)=d\geq d_{\widetilde{T}}(u_1, w)$,}\\[5pt]
&\text{$e_{\widetilde{T}}(w)=d_{\widetilde{T}}(v_0, w)=d_{\widetilde{T}}(u_1, w).$}
\end{align*}
By the definition of eccentricity matrix, we have $(\varepsilon(\widetilde{T}))_{u_1w}=d_{\widetilde{T}}(u_1, w)$.

It is easy to see that $d_T(u_1, w)<d_{\widetilde{T}}(u_1, w)$. Hence, for $w\in \bigcup_{\frac{d+1}{2} \leq j\leq d}V(T_j)$, we have $(\varepsilon(T))_{u_1w}<(\varepsilon(\widetilde{T}))_{u_1w}$.
\begin{Case}
$e_{T_i}(v_i)=i$.
\end{Case}
In this case, for $w\in \bigcup_{\frac{d+1}{2}\leq j\leq d}V(T_j)$, we have $d_{T}(u_1, w)=e_{T}(w)=e_{\widetilde{T}}(w)=d_{\widetilde{T}}(u_1, w)$. Hence, $(\varepsilon(T))_{u_1w}=(\varepsilon(\widetilde{T}))_{u_1w}$.

Together with Cases 1 and 2, we have $\varepsilon(T)\leqslant \varepsilon(\widetilde{T})$, with equality if and only if $d_{T_i}(u_1, v_i)=i$. By Lemmas \ref{lem2.2} and \ref{lem2.3}, we obtain $\varepsilon_1(T)\leqslant \varepsilon_1(\widetilde{T})$, with equality if and only if $d_{T_i}(u_1, v_i)=i$.

This completes the proof.
\end{proof}
\begin{figure}[!ht]
\begin{center}
\psfrag{a}{$v_0$}
\psfrag{b}{$v_1$}
\psfrag{c}{$v_i$}
\psfrag{d}{$v_{d-2}$}
\psfrag{e}{$v_{d-1}$}
\psfrag{f}{$v_d$}
\psfrag{g}{$u$}
\psfrag{o}{$\widetilde{T}$}
\psfrag{p}{$T$}
\psfrag{1}{$\cdots$}
\includegraphics[width=130mm]{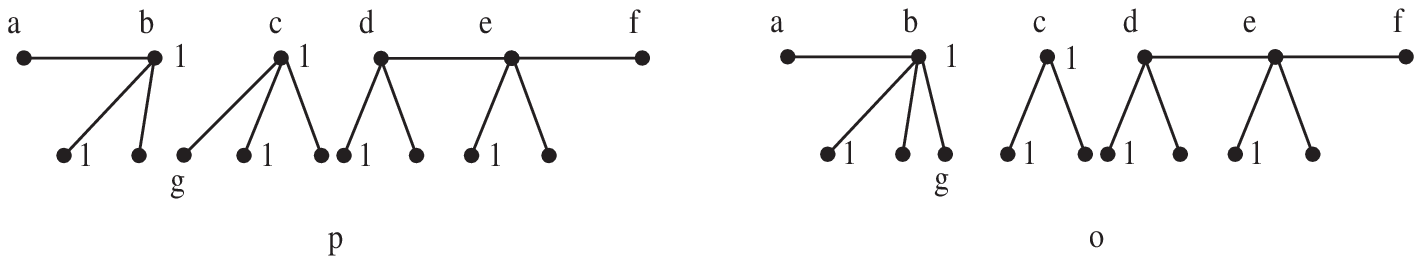} \\
\caption{Trees $T$ and $\widetilde{T}$ in Lemma \ref{lem3.4}.}
\end{center}
\end{figure}

\begin{lem}\label{lem3.4}
Given an $n$-vertex caterpillar tree $T$, $P_{d+1}=v_0v_1v_2 \cdots v_d$, ($d\geqslant 5$ is odd) is a diametrical path of $T$. Assume $d_{T}(v_i)\geq3$ ($2\leq i\leq\frac{d-1}{2}$). Moving a pendant edge, say $v_iu$ from $v_i$ to $v_1$ yields the tree $\widetilde{T}$ (see Fig. 3). Then $\varepsilon_1(T)<\varepsilon_1(\widetilde{T})$.
\end{lem}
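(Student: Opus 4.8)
The plan is to show that, after viewing both matrices as indexed by the common vertex set $V(T)$, one has $\varepsilon(T)\leqslant\varepsilon(\widetilde{T})$ entrywise with $\varepsilon(T)\neq\varepsilon(\widetilde{T})$, and then to invoke Lemma \ref{lem2.2} (applicable since $\varepsilon(T)$ and $\varepsilon(\widetilde{T})$ are nonnegative and irreducible by Lemma \ref{lem2.3}) together with the fact that for a tree the $\varepsilon$-spectral radius equals the largest $\varepsilon$-eigenvalue. Since the only vertex whose incident distances change under the move $T\mapsto\widetilde{T}$ is $u$, the real content is a comparison of the row and column of $u$ in the two eccentricity matrices.

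First I would record the structural fact that drives everything: in a caterpillar with diametrical path $v_0v_1\cdots v_d$, every vertex $w$ satisfies $e(w)=\max\{d(w,v_0),d(w,v_d)\}$. This follows by writing $w$ as a leaf attached to (or equal to) a spine vertex $v_k$ with $1\leqslant k\leqslant d-1$ and checking that the distance from $w$ to any other vertex is dominated by its distance to $v_0$ or to $v_d$; here one uses that $v_0\cdots v_d$ is diametrical, which forces $1\leqslant k\leqslant d-1$. Since $\widetilde{T}$ is again a caterpillar with the same diametrical path $v_0\cdots v_d$ (moving $u$ to $v_1$ creates no path longer than $d$ because $i\leqslant\frac{d-1}{2}$), the same formula holds in $\widetilde{T}$. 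Hence $e_T(w)=e_{\widetilde{T}}(w)$ for every $w\neq u$, and $d_T(w,w')=d_{\widetilde{T}}(w,w')$ for $\{w,w'\}\subseteq V(T)\setminus\{u\}$, so the principal submatrices of $\varepsilon(T)$ and $\varepsilon(\widetilde{T})$ indexed by $V(T)\setminus\{u\}$ coincide.

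Next comes the main computation: the $u$-row of each matrix. In $T$ one has $e_T(u)=d_T(u,v_d)=d-i+1<d$, and a case analysis (splitting into $e_T(w)\geqslant d-i+1$ and $e_T(w)<d-i+1$, and within each, according to whether $w$ is a spine vertex or a leaf and whether it lies on the $v_0$-side or the $v_d$-side of $v_i$) shows that $(\varepsilon(T))_{uw}\neq0$ only when $w$ is a leaf at $v_{d-1}$ (in particular $w=v_d$), in which case $(\varepsilon(T))_{uw}=d-i+1$. In $\widetilde{T}$, $u$ becomes a pendant neighbour of $v_1$ (a twin of $v_0$) with $e_{\widetilde{T}}(u)=d$, so $(\varepsilon(\widetilde{T}))_{uw}\neq0$ exactly when $d_{\widetilde{T}}(w,v_0)\geqslant d_{\widetilde{T}}(w,v_d)$, and for each leaf $w$ at $v_{d-1}$ one gets $(\varepsilon(\widetilde{T}))_{uw}=d_{\widetilde{T}}(u,w)=d$. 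Comparing entry by entry: on $V(T)\setminus\{u\}$ the two matrices agree; at each leaf $w$ of $v_{d-1}$ we have $(\varepsilon(T))_{uw}=d-i+1<d=(\varepsilon(\widetilde{T}))_{uw}$, strict since $i\geqslant2$; and at every remaining position $(\varepsilon(T))_{uw}=0\leqslant(\varepsilon(\widetilde{T}))_{uw}$ because eccentricity-matrix entries are nonnegative. Thus $\varepsilon(T)\leqslant\varepsilon(\widetilde{T})$ with $\varepsilon(T)\neq\varepsilon(\widetilde{T})$, and Lemma \ref{lem2.2} yields $\varepsilon_1(T)<\varepsilon_1(\widetilde{T})$.

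The main obstacle is this second step: one must be sure that in $T$ the row of $u$ contains \emph{only} the comparatively small entries $d-i+1$ — equivalently, that $u$ is never itself the eccentricity-realizing vertex of a low-eccentricity vertex $w$ — and that in $\widetilde{T}$ the corresponding entries at the leaves of $v_{d-1}$ genuinely jump up to $d$. Both facts rest on the inequality $i\leqslant\frac{d-1}{2}$ and on the caterpillar eccentricity formula above; the bookkeeping over the positions of $w$ is routine but must be carried out carefully, and this is precisely where the hypotheses $d\geqslant5$ odd and $2\leqslant i\leqslant\frac{d-1}{2}$ are used.
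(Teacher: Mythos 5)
Your proposal is correct and follows essentially the same route as the paper: both establish the entrywise inequality $\varepsilon(T)\leqslant\varepsilon(\widetilde{T})$ with strict increase at the positions $(u,w)$ for $w$ a pendant neighbour of $v_{d-1}$ (from $d-i+1$ to $d$) and zeros elsewhere in the $u$-row of $\varepsilon(T)$, then conclude via Lemmas \ref{lem2.2} and \ref{lem2.3}. Your explicit use of the caterpillar eccentricity formula $e(w)=\max\{d(w,v_0),d(w,v_d)\}$ is a slightly more careful justification of the bookkeeping that the paper leaves implicit, but the argument is the same.
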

\begin{proof}
It is obvious that $d_{T}(w,w')=d_{\widetilde{T}}(w,w')$ for $\{w,w'\}\subseteq V(T)\setminus \{u\}$ and $e_{T}(w)=e_{\widetilde{T}}(w)$ for $w\in V(T)\setminus \{u\}$. By the definition of eccentricity matrix, we have $(\varepsilon(T))_{ww'}=(\varepsilon(\widetilde{T}))_{ww'}$ for any $\{w,w'\}\subseteq V(T)\setminus \{u\}$.

If $w$ is a pendant neighbor of $v_{d-1}$ (may be $v_d$), then $d_{T}(u,w)=d-i+1=e_{T}(u)$ and $d_{\widetilde{T}}(u,w)=d=e_{\widetilde{T}}(u)$. Hence, $(\varepsilon(T))_{uw}=d-i+1<d=(\varepsilon(\widetilde{T}))_{uw}$. If $w$ is not a pendant neighbor of $v_{d-1}$, then we have $(\varepsilon(T))_{uw}<\min\{e_{T}(u),e_{T}(w)\}$. Thus, $(\varepsilon(T))_{uw}=0\leqslant(\varepsilon(\widetilde{T}))_{uw}$.

Clearly, $\varepsilon(T)\leqslant \varepsilon(\widetilde{T})$ and $\varepsilon(T)\neq\varepsilon(\widetilde{T})$. By Lemmas \ref{lem2.2} and \ref{lem2.3}, we obtain $\varepsilon_1(T)<\varepsilon_1(\widetilde{T})$.
\end{proof}

Our next main result in this section determines the unique tree among $\mathscr{T}_{n,d}$ with odd $d\geqslant 5$, having the maximum $\varepsilon$-spectral radius.
\begin{thm}\label{theorem2.4}
Let $T$ be in $\mathscr{T}_{n,d}$ with odd $d\geqslant 5$. Then
\begin{align*}
\rho\leq \max\Big\{\varepsilon_1(D_{n,d}^{0, n-d-1}), \varepsilon_1(D_{n,d}^{\left\lfloor\frac{n-d-1}{2}\right\rfloor, \left\lceil\frac{n-d-1}{2}\right\rceil})\Big\}
\end{align*}
with equality only if $T\cong D_{n,d}^{0, n-d-1}$ or $D_{n,d}^{\left\lfloor\frac{n-d-1}{2}\right\rfloor, \left\lceil\frac{n-d-1}{2}\right\rceil}$.
\end{thm}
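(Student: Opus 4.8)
The plan is to transform an arbitrary $T\in\mathscr{T}_{n,d}$ into a tree of type $D_{n,d}^{a,b}$ without decreasing $\varepsilon_1$, and then to maximise $\varepsilon_1$ over the family $\{D_{n,d}^{a,b}\}$. \emph{Reduction.} Fix a diametrical path $P=v_0v_1\cdots v_d$ of $T$. Since $P$ is diametrical, $v_0$ and $v_d$ are leaves, the components of $T-E(P)$ at $v_1$ and $v_{d-1}$ are stars centred at $v_1,v_{d-1}$, and the branch $T_i$ at $v_i$ satisfies $e_{T_i}(v_i)\le\min\{i,\,d-i\}$ for $2\le i\le d-2$ (a deeper branch would yield a path longer than $d$). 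Applying Lemma \ref{lem3.3} for $2\le i\le\frac{d-1}{2}$ and its mirror image (reverse $P$) for $\frac{d+1}{2}\le i\le d-2$ repeatedly, one re-attaches a deepest vertex of an internal branch as a pendant of $v_1$ or $v_{d-1}$; each such move keeps the diameter equal to $d$ and does not decrease $\varepsilon_1$, and after finitely many moves $T$ has become a caterpillar $T'$ with $\varepsilon_1(T)\le\varepsilon_1(T')$. Then Lemma \ref{lem3.4} and its mirror image, applied repeatedly, move every pendant off $v_2,\dots,v_{d-2}$ onto $v_1$ or $v_{d-1}$; each move strictly increases $\varepsilon_1$ and the process stops at some $D_{n,d}^{a,b}$ with $a+b=n-d-1$. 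After relabelling so that $b\ge a$ we obtain $\varepsilon_1(T)\le\varepsilon_1(T')\le\varepsilon_1(D_{n,d}^{a,b})$.

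\emph{The $\varepsilon$-spectrum of $D_{n,d}^{a,b}$.} Put $L=\{v_0,u_1,\dots,u_a\}$, $R=\{v_d,w_1,\dots,w_b\}$, $p=a+1$, $q=b+1$ and $M=\{v_2,\dots,v_{d-2}\}$. A direct computation of eccentricities and distances shows that the nonzero entries of $\varepsilon(D_{n,d}^{a,b})$ are precisely: the value $d$ on $L\times R$; the value $d-1$ on $L\times\{v_{d-1}\}$ and on $R\times\{v_1\}$; the value $i$ on $L\times\{v_i\}$ for $\frac{d+1}{2}\le i\le d-2$; and the value $d-i$ on $R\times\{v_i\}$ for $2\le i\le\frac{d-1}{2}$. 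In particular no two vertices of $M\cup\{v_1,v_{d-1}\}$ are $\varepsilon$-adjacent and every $v_i\in M$ is $\varepsilon$-adjacent to exactly one of $L$, $R$. The partition into $L$, $R$, $\{v_1\}$, $\{v_{d-1}\}$, $\{v_2\},\dots,\{v_{d-2}\}$ is equitable, so $\rho:=\varepsilon_1(D_{n,d}^{a,b})$ is an eigenvalue of the corresponding quotient matrix; using that the positive Perron eigenvector is constant on $L$ and on $R$, and eliminating the coordinates at $v_1$, $v_{d-1}$ and at the vertices of $M$ from the quotient eigen-equations, one gets
\[
(\rho^{2}-pK)(\rho^{2}-qK)=d^{2}pq\,\rho^{2},\qquad K:=(d-1)^{2}+\sum_{i=(d+1)/2}^{d-2}i^{2}.
\]
A short argument shows every nonzero eigenvalue of the quotient equals $\pm\sqrt{t}$ for a root $t$ of $G_r(t):=t^{2}-(mK+d^{2}r)t+rK^{2}$, where $m:=p+q=n-d+1$ and $r:=pq$, and since $\rho$ is the spectral radius this forces $\rho^{2}=t_{+}(r)$, the larger root of $G_r$.

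\emph{Optimising over $r$.} Now $m$ is fixed and $r=(a+1)(b+1)$ runs over a discrete subset of $[\,m-1,\ \lfloor m/2\rfloor\lceil m/2\rceil\,]$ whose left endpoint $m-1$ is realised only by $D_{n,d}^{0,\,n-d-1}$ and whose right endpoint is realised only by $D_{n,d}^{\lfloor(n-d-1)/2\rfloor,\,\lceil(n-d-1)/2\rceil}$. Implicit differentiation of $G_r(t_{+})=0$ gives $t_{+}'(r)=(d^{2}t_{+}-K^{2})\big/(2t_{+}-mK-d^{2}r)$, with denominator $\sqrt{\operatorname{disc}G_r}>0$ wherever $G_r$ has distinct real roots, so $t_{+}'$ has the sign of $d^{2}t_{+}(r)-K^{2}$. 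As $G_r(K^{2}/d^{2})=K^{3}(K-md^{2})/d^{4}$ is independent of $r$ and nonzero in the non‑degenerate case $K\ne md^{2}$ (which holds for all admissible $n,d$), the value $t_{+}(r)$ is never $K^{2}/d^{2}$, hence $d^{2}t_{+}(r)-K^{2}$ has constant sign on each interval where $t_{+}$ is real; comparing the behaviour $t_{+}\to mK$ as $r\to0^{+}$ with $t_{+}\to\infty$ as $r\to\infty$, the restriction of $t_{+}$ to the admissible range of $r$ is strictly increasing, strictly decreasing, or strictly decreasing and then strictly increasing. In all cases its maximum over the admissible $r$'s is attained only at an endpoint of the range, so
\[
\varepsilon_1(D_{n,d}^{a,b})\le\max\Big\{\varepsilon_1(D_{n,d}^{0,\,n-d-1}),\ \varepsilon_1(D_{n,d}^{\lfloor(n-d-1)/2\rfloor,\,\lceil(n-d-1)/2\rceil})\Big\},
\]
with equality only if $D_{n,d}^{a,b}$ is one of the two displayed trees. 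Together with the reduction this gives the asserted inequality. For the equality clause, note that if $T$ carries a nontrivial branch at some internal $v_i$ ($2\le i\le d-2$) then some step of the reduction is strict — either a non-equality instance of Lemma \ref{lem3.3} once the branch depth falls below $i$, or else an application of Lemma \ref{lem3.4} to the nontrivial star the branch collapses to — whence $\varepsilon_1(T)<\varepsilon_1(D_{n,d}^{a,b})\le\max\{\cdots\}$; so equality forces $T=D_{n,d}^{a,b}$, and then the displayed inequality forces $T\cong D_{n,d}^{0,\,n-d-1}$ or $T\cong D_{n,d}^{\lfloor(n-d-1)/2\rfloor,\,\lceil(n-d-1)/2\rceil}$.

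\emph{Main obstacle.} The reduction is essentially bookkeeping given Lemmas \ref{lem3.3}–\ref{lem3.4}; the substantive part is the spectral analysis of $D_{n,d}^{a,b}$ for general odd $d$: carrying out the eccentricity/distance case analysis to pin down $\varepsilon(D_{n,d}^{a,b})$, reducing it cleanly to the degree‑two relation in $\rho^{2}$ above, and then proving $t_{+}(r)$ has no interior maximum. The point that $t_{+}$ may genuinely be "valley-shaped" in $r$ — which happens precisely when $md^{2}<K$, for instance for large $d$ with $n$ close to $d+1$ — is the reason the extremal value has to be stated as the larger of two candidates rather than a single tree.
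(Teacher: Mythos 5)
Your proposal is correct and follows essentially the same route as the paper: the same reduction of an arbitrary $T\in\mathscr{T}_{n,d}$ to some $D_{n,d}^{a,b}$ via Lemmas \ref{lem3.3} and \ref{lem3.4} (and their mirror images), the same determination of the nonzero pattern of $\varepsilon(D_{n,d}^{a,b})$, and the same resulting quadratic $t^{2}-(mK+d^{2}r)t+rK^{2}=0$ in $t=\rho^{2}$, with $K=\Gamma(d)$, $m=n-d+1$, $r=(a+1)(b+1)$. The one genuine divergence is the final optimisation over $r$, and there your version is actually the sounder one. The paper writes $\rho^{2}=f(x)$ and asserts that $f$ is convex on the admissible interval; in fact $f''$ has the sign of $md^{2}-\Gamma(d)$, so $f$ is strictly \emph{concave} precisely in the regime $\Gamma(d)>(n-d+1)d^{2}$ (e.g.\ $d\geq 23$ with $n=d+5$, where the admissible set $\{5,8,9\}$ has an interior point), and convexity alone cannot rule out an interior maximum there. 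What saves the theorem is exactly your observation: since $G_r(K^{2}/d^{2})=K^{3}(K-md^{2})/d^{4}$ is independent of $r$ and nonzero (one checks $\Gamma(d)=md^{2}$ is impossible for $d\geq5$, $n\geq d+1$), the derivative $t_{+}'(r)$ keeps a fixed sign, so $t_{+}$ is strictly monotone and the maximum sits at an endpoint — increasing (balanced tree wins) when $K<md^{2}$, decreasing ($D_{n,d}^{0,n-d-1}$ wins) when $K>md^{2}$. Two points to tighten in your write-up: (1) the alternative ``strictly decreasing and then strictly increasing'' is vacuous once the sign of $d^{2}t_{+}-K^{2}$ is constant on a connected interval, and for that you should verify that the discriminant $(mK+d^{2}r)^{2}-4rK^{2}$ stays positive on all of $[m-1,\lfloor m/2\rfloor\lceil m/2\rceil]$ — it does, because it is positive at $r=0$ and, when $K>md^{2}$, its smaller root in $r$ exceeds $m^{2}/4$, so the whole admissible range lies to its left; (2) in your closing remark the delicate regime $md^{2}<K$ makes $t_{+}$ concave and \emph{decreasing} rather than ``valley-shaped'' — the need for two candidate extremal trees reflects the two possible signs of $t_{+}'$, not a convex dip. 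Your explicit treatment of the equality case (a nontrivial internal branch forces at least one strict step, either a Lemma \ref{lem3.3} move once the branch depth drops below $i$ or a Lemma \ref{lem3.4} move) is a welcome addition that the paper leaves implicit.
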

\begin{proof}
Assume that $T$ is the tree with maximum $\varepsilon$-spectral radius among $\mathscr{T}_{n,d}$ with odd $d\geqslant 5$. By making frequent use of Lemmas \ref{lem3.3} and \ref{lem3.4}, we obtain that $T$ is isomorphic to some $D_{n,d}^{a,b}$, where $b\geq a\geq0$. Choose a diametrical path $P_{d+1}=v_0v_1v_2 \cdots v_d$ in $D_{n,d}^{a,b}$. Denote by $U=\{v_0, u_1, \cdots u_a\}$ the set of pendant neighbors of $v_1$ and let $W=\{v_d, w_1, \cdots w_b\}$ be the set of pendant neighbors of $v_{d-1}$ in $D_{n,d}^{a,b}$. By definition, the eccentricity matrix $\varepsilon(D_{n,d}^{a,b})$ is equal to

{\footnotesize
$$
\bordermatrix{
    & v_0 & v_1 & \cdots & v_{\frac{d-3}{2}} & v_{\frac{d-1}{2}} & v_{\frac{d+1}{2}} & v_{\frac{d+3}{2}} & \cdots & v_{d-1} & v_d & u_1 & \cdots & u_a & w_1 & \cdots & w_b \cr
v_0 & 0 & 0 & \cdots & 0 & 0 & \frac{d+1}{2} & \frac{d+3}{2} \ & \cdots & d-1 & d & 0 & \cdots & 0 & d & \cdots & d  \cr
v_1 & 0 & 0 & \cdots & 0 & 0 & 0 & 0 & \cdots & 0 & d-1 & 0 & \cdots & 0 & d-1 & \cdots & d-1  \cr
\vdots & \vdots & \vdots & \ddots &  \vdots & \vdots & \vdots & \vdots & \ddots & \vdots & \vdots & \vdots & \ddots & \vdots & \vdots & \ddots & \vdots  \cr
v_{\frac{d-3}{2}} & 0 & 0 & \cdots & 0 & 0 & 0 & 0 & \cdots & 0 & \frac{d+3}{2} & 0 & \cdots & 0 & \frac{d+3}{2} & \cdots & \frac{d+3}{2}  \cr
v_{\frac{d-1}{2}} & 0 & 0 & \cdots & 0 & 0 & 0 & 0 & \cdots & 0 & \frac{d+1}{2} & 0 & \cdots & 0 & \frac{d+1}{2} & \cdots & \frac{d+1}{2}  \cr
v_{\frac{d+1}{2}} & \frac{d+1}{2} & 0 & \cdots & 0 & 0 & 0 & 0 & \cdots & 0 & 0 & \frac{d+1}{2} & \cdots & \frac{d+1}{2} & 0 & \cdots & 0  \cr
v_{\frac{d+3}{2}} & \frac{d+3}{2} & 0 & \cdots & 0 & 0 & 0 & 0 & \cdots & 0 & 0 & \frac{d+3}{2} & \cdots & \frac{d+3}{2} & 0 & \cdots & 0  \cr
\vdots & \vdots & \vdots & \ddots & \vdots & \vdots & \vdots & \vdots & \ddots & \vdots & \vdots & \vdots & \ddots & \vdots & \vdots & \ddots & \vdots  \cr
v_{d-1} & d-1 & 0 & \cdots & 0 & 0 & 0 & 0 & \cdots & 0 & 0 & d-1 & \cdots & d-1 & 0 & \cdots & 0  \cr
v_d & d & d-1 & \cdots & \frac{d+3}{2} & \frac{d+1}{2} & 0 & 0 & \cdots & 0 & 0 & d & \cdots & d & 0 & \cdots & 0  \cr
u_1 & 0 & 0 & \cdots & 0 & 0 & \frac{d+1}{2} & \frac{d+3}{2} \ & \cdots & d-1 & d & 0 & \cdots & 0 & d & \cdots & d  \cr
\vdots & \vdots & \vdots & \ddots & \vdots & \vdots & \vdots & \vdots & \ddots & \vdots & \vdots & \vdots & \ddots & \vdots & \vdots & \ddots & \vdots  \cr
u_a & 0 & 0 & \cdots & 0 & 0 & \frac{d+1}{2} & \frac{d+3}{2} \ & \cdots & d-1 & d & 0 & \cdots & 0 & d & \cdots & d  \cr
w_1 & d & d-1 & \cdots & \frac{d+3}{2} & \frac{d+1}{2} & 0 & 0 & \cdots & 0 & 0 & d & \cdots & d & 0 & \cdots & 0  \cr
\vdots & \vdots & \vdots & \ddots & \vdots & \vdots & \vdots & \vdots & \ddots & \vdots & \vdots & \vdots & \ddots & \vdots & \vdots & \ddots & \vdots  \cr
w_b & d & d-1 & \cdots & \frac{d+3}{2} & \frac{d+1}{2} & 0 & 0 & \cdots & 0 & 0 & d & \cdots & d & 0 & \cdots & 0  \cr
}.
$$
}

Let ${\bf x}$ be a Perron eigenvector corresponding to $\rho:=\varepsilon_1(D_{n,d}^{a,b})$, whose coordinate with respect to vertex $v$ is ${\bf x}_{v}$. Since $\rho {\bf x}_u=\frac{d+1}{2}{\bf x}_{v_{\frac{d+1}{2}}}+\frac{d+3}{2}{\bf x}_{v_{\frac{d+3}{2}}}+\cdots+(d-1){\bf x}_{v_{d-1}}+d{\bf x}_{v_{d}}+d\sum_{i=1}^b{\bf x}_{w_{i}}$ for each $u\in U$, we can get ${\bf x}_u={\bf x}_{u'}$ for $\{u,u'\}\subseteq U$. Similarly, ${\bf x}_w={\bf x}_{w'}$ for $\{w,w'\}\subseteq W$. Then we obtain
\begin{align*}
\rho {\bf x}_u&=\frac{d+1}{2}{\bf x}_{v_{\frac{d+1}{2}}}+\frac{d+3}{2}{\bf x}_{v_{\frac{d+3}{2}}}+\cdots+(d-1){\bf x}_{v_{d-1}}+(b+1)d{\bf x}_{w};\\
\rho {\bf x}_w&=(a+1)d{\bf x}_{u}+(d-1){\bf x}_{v_{1}}+(d-2){\bf x}_{v_{2}}+\cdots+\frac{d+1}{2}{\bf x}_{v_{\frac{d-1}{2}}};\\
\rho {\bf x}_{v_1}&=(b+1)(d-1){\bf x}_w; \\
&\vdots\\
\rho {\bf x}_{v_{\frac{d-1}{2}}}&=(b+1)\cdot\frac{d+1}{2}{\bf x}_w; \\
\rho {\bf x}_{v_{d-1}}&=(a+1)(d-1){\bf x}_u; \\
&\vdots\\
\rho {\bf x}_{v_{\frac{d+1}{2}}}&=(a+1)\cdot\frac{d+1}{2}{\bf x}_u,
\end{align*}
for any $u\in U,w\in W$.

Let $(\frac{d+1}{2})^2+(\frac{d+3}{2})^2+\cdots+(d-1)^2=\frac{d(d-1)(7d-5)}{24}:=\Gamma(d)$. Hence,
\begin{align*}
\rho^2{\bf x}_u&=\rho\Big[\frac{d+1}{2}{\bf x}_{v_{\frac{d+1}{2}}}+\frac{d+3}{2}{\bf x}_{v_{\frac{d+3}{2}}}+\cdots+(d-1){\bf x}_{v_{d-1}}+(b+1)d{\bf x}_{w}\Big]\\
&=(a+1)(\frac{d+1}{2})^2{\bf x}_u+(a+1)(\frac{d+3}{2})^2{\bf x}_u+\cdots+(a+1)(d-1)^2{\bf x}_u+(b+1)d\rho{\bf x}_w\\
&=(a+1)\Gamma(d){\bf x}_u+(b+1)d\rho{\bf x}_w
\end{align*}
and
\begin{align*}
\rho^2{\bf x}_w&=\rho\Big[(a+1)d{\bf x}_{u}+(d-1){\bf x}_{v_{1}}+(d-2){\bf x}_{v_{2}}+\cdots+\frac{d+1}{2}{\bf x}_{v_{\frac{d-1}{2}}}\Big]\\
&=(a+1)d\rho{\bf x}_u+(b+1)(d-1)^2{\bf x}_w+(b+1)(d-2)^2{\bf x}_w+\cdots+(b+1)(\frac{d+1}{2})^2{\bf x}_w\\
&=(a+1)d\rho{\bf x}_u+(b+1)\Gamma(d){\bf x}_w.
\end{align*}
That is,
\begin{align*}
\rho^2{\bf x}_u-(a+1)\Gamma(d){\bf x}_u-(b+1)d\rho{\bf x}_{w}&=0,\\
-(a+1)d\rho{\bf x}_u+\rho^2{\bf x}_w-(b+1)\Gamma(d){\bf x}_w&=0.
\end{align*}

Since ${\bf x}_u\neq0$ and ${\bf x}_w\neq0$, $\rho$ is the largest root of
\begin{align}\label{3.1}
\left|\begin{array}{cc}
  t^2-(a+1)\Gamma(d)   & -(b+1)dt \\
  -(a+1)dt   & t^2-(b+1)\Gamma(d) \\
               \end{array}\right|=0. \notag
\end{align}
By calculation and the fact $a+b=n-d-1$, we have
\begin{align*}
0&=\Big[t^2-(a+1)\Gamma(d)\Big]\Big[t^2-(b+1)\Gamma(d)\Big]-(a+1)(b+1)d^2t^2\\
&=t^4-\Big[(a+1)\Gamma(d)+(b+1)\Gamma(d)+(a+1)(b+1)d^2\Big]t^2+(a+1)(b+1)(\Gamma(d))^2\\
&=t^4-\Big[\Gamma(d)(n-d+1)+(a+1)(b+1)d^2\Big]t^2+(a+1)(b+1)(\Gamma(d))^2.
\end{align*}
Let $\Delta(a,b):=\Big[\Gamma(d)(n-d+1)+(a+1)(b+1)d^2\Big]^2-4(a+1)(b+1)(\Gamma(d))^2$. By a direct calculation, $\rho^2$ is equal to
\begin{align}
\frac{1}{2}\Big[\Gamma(d)(n-d+1)+(a+1)(b+1)d^2\Big]+\frac{1}{2}\sqrt{\Delta(a,b)}
\end{align}
Let $x=(a+1)(b+1)$. We have $n-d\leq x\leq(\left\lfloor\frac{n-d-1}{2}\right\rfloor+1)(\left\lceil\frac{n-d-1}{2}\right\rceil+1)$, since $0\leq a\leq b\leq n-d-1$ and $a+b=n-d-1$. Hence,
\begin{align*}
\rho^2=\frac{1}{2}\Big[\Gamma(d)(n-d+1)+d^2x\Big]+\frac{1}{2}\sqrt{\Big[\Gamma(d)(n-d+1)+d^2x\Big]^2-4(\Gamma(d))^2x}.
\end{align*}

Clearly, for fixed $n$ and $d$, $f(x):=\frac{1}{2}\Big[\Gamma(d)(n-d+1)+d^2x\Big]+\frac{1}{2}\sqrt{\Big[\Gamma(d)(n-d+1)+d^2x\Big]^2-4(\Gamma(d))^2x}$ is a convex function in interval $[n-d, (\left\lfloor\frac{n-d-1}{2}\right\rfloor+1)\cdot(\left\lceil\frac{n-d-1}{2}\right\rceil+1)]$. The convexity of $f(x)$ implies that
\begin{align*}
f(x)\leq \max\Big\{f(n-d), f(\Big(\left\lfloor\frac{n-d-1}{2}\right\rfloor+1\Big)\cdot\Big(\left\lceil\frac{n-d-1}{2}\right\rceil+1\Big))\Big\}
\end{align*}
with equality only if $x=n-d$ or $(\left\lfloor\frac{n-d-1}{2}\right\rfloor+1)\cdot(\left\lceil\frac{n-d-1}{2}\right\rceil+1)$. Note that $\varepsilon_1(D_{n,d}^{0, n-d-1})=\sqrt{f(n-d)}$ and $\varepsilon_1(D_{n,d}^{\left\lfloor\frac{n-d-1}{2}\right\rfloor, \left\lceil\frac{n-d-1}{2}\right\rceil})=\sqrt{f(\Big(\left\lfloor\frac{n-d-1}{2}\right\rfloor+1\Big)\cdot\Big(\left\lceil\frac{n-d-1}{2}\right\rceil+1\Big))}$. Thus, we have
\begin{align*}
\rho\leq \max\Big\{\varepsilon_1(D_{n,d}^{0, n-d-1}), \varepsilon_1(D_{n,d}^{\left\lfloor\frac{n-d-1}{2}\right\rfloor, \left\lceil\frac{n-d-1}{2}\right\rceil})\Big\}
\end{align*}
with equality only if $T\cong D_{n,d}^{0, n-d-1}$ or $D_{n,d}^{\left\lfloor\frac{n-d-1}{2}\right\rfloor, \left\lceil\frac{n-d-1}{2}\right\rceil}$.
\end{proof}

Together with the proofs of Theorems \ref{thm3.2} and \ref{theorem2.4}, we obtain the following result.
\begin{thm}\label{theorem2.6}
Let $T$ be an $n$-vertex tree with odd diameter $d$. Then
\begin{align*}
\rho\leq \max\Big\{\varepsilon_1(D_{n,d}^{0, n-d-1}), \varepsilon_1(D_{n,d}^{\left\lfloor\frac{n-d-1}{2}\right\rfloor, \left\lceil\frac{n-d-1}{2}\right\rceil})\Big\}
\end{align*}
with equality only if $T\cong D_{n,d}^{0, n-d-1}$ or $D_{n,d}^{\left\lfloor\frac{n-d-1}{2}\right\rfloor, \left\lceil\frac{n-d-1}{2}\right\rceil}$. Especially, $D_{n,3}^{\lfloor \frac{n-4}{2}\rfloor, \lceil \frac{n-4}{2}\rceil}$ is the unique tree maximizing $\varepsilon$-spectral radius among all the trees in $\mathscr{T}_{n,3}$.
\end{thm}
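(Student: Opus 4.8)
The plan is to obtain Theorem~\ref{theorem2.6} as a direct amalgamation of the two cases already disposed of above, according to whether the odd diameter $d$ equals $3$ or is at least $5$; these two possibilities are exhaustive.

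If $d\geq5$, there is nothing new to prove: Theorem~\ref{theorem2.4} states exactly the displayed inequality $\rho\leq\max\{\varepsilon_1(D_{n,d}^{0,n-d-1}),\varepsilon_1(D_{n,d}^{\lfloor\frac{n-d-1}{2}\rfloor,\lceil\frac{n-d-1}{2}\rceil})\}$ together with the asserted equality condition for every $T\in\mathscr{T}_{n,d}$, so I would simply invoke it.

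If $d=3$, I would first record the structural fact that every tree in $\mathscr{T}_{n,3}$ is a double star, hence isomorphic to $D_{n,3}^{a,b}$ for suitable $a,b$ with $a+b=n-4$ and $b\geq a\geq0$: a diametrical path $v_0v_1v_2v_3$ forces each of the remaining $n-4$ vertices to be a pendant neighbour of $v_1$ or of $v_2$. Then I would apply Lemma~\ref{lem3.1} repeatedly, balancing one pendant vertex at a time, to produce the strictly increasing chain
\[
\varepsilon_1(D_{n,3}^{0,n-4})<\varepsilon_1(D_{n,3}^{1,n-5})<\cdots<\varepsilon_1\bigl(D_{n,3}^{\lfloor\frac{n-4}{2}\rfloor,\lceil\frac{n-4}{2}\rceil}\bigr),
\]
which is the content of Theorem~\ref{thm3.2}. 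In particular the maximum of the two quantities appearing in the statement of Theorem~\ref{theorem2.6} is $\varepsilon_1(D_{n,3}^{\lfloor\frac{n-4}{2}\rfloor,\lceil\frac{n-4}{2}\rceil})$; the bound therefore holds for all $T\in\mathscr{T}_{n,3}$, and since the chain is strict and no tree outside the double-star family competes, equality forces $T\cong D_{n,3}^{\lfloor\frac{n-4}{2}\rfloor,\lceil\frac{n-4}{2}\rceil}$. This last sentence is precisely the \emph{especially} clause, and combining the two cases completes the proof.

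I do not expect a genuine obstacle: the substantive work is already contained in Lemmas~\ref{lem3.1}, \ref{lem3.3}, \ref{lem3.4} and in Theorems~\ref{thm3.2} and \ref{theorem2.4}, and what remains is organisational. The only point that calls for a little care is checking that the $d=3$ instance is consistent with the general formula --- i.e.\ recognising, via the monotonicity in Lemma~\ref{lem3.1}, that the ``path-like'' candidate $D_{n,3}^{0,n-4}$ never exceeds the balanced candidate, so that listing both of them in the statement of Theorem~\ref{theorem2.6} is harmless and does not spoil the uniqueness of the extremal tree in $\mathscr{T}_{n,3}$.
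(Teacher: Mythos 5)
Your proposal is correct and matches the paper's own (very brief) argument: the paper simply states that Theorem \ref{theorem2.6} follows by combining Theorem \ref{thm3.2} (the $d=3$ case, itself obtained from the double-star structure of diameter-$3$ trees and repeated application of Lemma \ref{lem3.1}) with Theorem \ref{theorem2.4} (the case of odd $d\geq5$). Your additional observation that Lemma \ref{lem3.1} makes the balanced candidate dominate $D_{n,3}^{0,n-4}$, so that the ``especially'' clause is consistent with the general bound, is exactly the right reconciliation.
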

\section{Trees with least $\varepsilon$-eigenvalues in $[-2-\sqrt{13},-2\sqrt{2})$}
In this section, we investigate trees with least $\varepsilon$-eigenvalues in $[-2-\sqrt{13},-2\sqrt{2})$. Let $\varepsilon_n(G)$ be the least $\varepsilon$-eigenvalue of a graph $G$ with order $n$.

For $p\geq0$ and $q\geq2$, let $H_{p,q}$ be the graph obtained from the star $S_{p+q+1}$ by attaching a pendant vertex to each of $q$ chosen pendant vertices (see Fig. 4).
\begin{figure}[!ht]
\begin{center}
\psfrag{a}{$a_1$}
\psfrag{b}{$a_p$}
\psfrag{c}{$w$}
\psfrag{d}{$b_1$}
\psfrag{e}{$b_2$}
\psfrag{f}{$b_q$}
\psfrag{g}{$c_1$}
\psfrag{h}{$c_2$}
\psfrag{i}{$c_q$}
\psfrag{1}{$\vdots$}
\includegraphics[width=50mm]{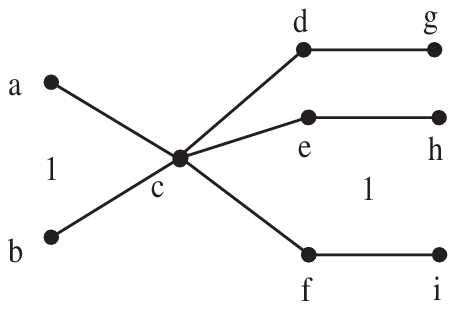} \\
\caption{$H_{p,q}$ with $p\geq0$ and $q\geq2$.}
\end{center}
\end{figure}
Let ${\bf J}_{n\times m}$ and ${\bf 0}_{n\times m}$ be respectively all-one and the all-zero $n\times m$ matrices. Let ${\bf J}_{n}={\bf J}_{n\times n}$, ${\bf 1}_{n}={\bf J}_{n\times 1}$, and ${\bf 0}_{n}={\bf 0}_{n\times 1}$.

The following is a key lemma that we will need in the proofs.
\begin{lem}[\cite{4}]\label{thm4}
Let ${\bf M}$, ${\bf N}$, ${\bf P}$ and ${\bf Q}$ be respectively $p\times p$, $p\times q$, $q\times p$ and $q\times q$ matrices, where ${\bf Q}$ is invertible. Then
\begin{equation*}
\left|
  \begin{array}{cc}
  {\bf M} & {\bf N} \\
  {\bf P} & {\bf Q}
  \end{array}
  \right|=|{\bf Q}|\cdot |{\bf M}-{\bf N}{\bf Q}^{-1}{\bf P}|.
\end{equation*}
\end{lem}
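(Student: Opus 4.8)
The plan is to establish the identity by block Gaussian elimination: I would factor the $(p+q)\times(p+q)$ block matrix as a product of a block upper-triangular matrix and a block lower-triangular matrix, and then read off the determinants. Since ${\bf Q}$ is invertible by hypothesis, the Schur complement ${\bf M}-{\bf N}{\bf Q}^{-1}{\bf P}$ is well-defined, and I would begin by writing down the factorization
\begin{equation*}
\left(\begin{array}{cc}{\bf M} & {\bf N}\\ {\bf P} & {\bf Q}\end{array}\right)
=\left(\begin{array}{cc}{\bf I}_p & {\bf N}{\bf Q}^{-1}\\ {\bf 0} & {\bf I}_q\end{array}\right)
\left(\begin{array}{cc}{\bf M}-{\bf N}{\bf Q}^{-1}{\bf P} & {\bf 0}\\ {\bf P} & {\bf Q}\end{array}\right).
\end{equation*}

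The correctness of this factorization is the first step I would verify, and it reduces to a routine block multiplication: the $(1,1)$-block produces $({\bf M}-{\bf N}{\bf Q}^{-1}{\bf P})+{\bf N}{\bf Q}^{-1}{\bf P}={\bf M}$, the $(1,2)$-block gives ${\bf N}{\bf Q}^{-1}{\bf Q}={\bf N}$, while the $(2,1)$- and $(2,2)$-blocks return ${\bf P}$ and ${\bf Q}$ directly. Once the factorization is in hand, I would take determinants of both sides. The first factor is block upper-triangular with identity blocks on the diagonal, so its determinant is $1$; the second factor is block lower-triangular with diagonal blocks ${\bf M}-{\bf N}{\bf Q}^{-1}{\bf P}$ and ${\bf Q}$, so its determinant equals $|{\bf M}-{\bf N}{\bf Q}^{-1}{\bf P}|\cdot|{\bf Q}|$. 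Multiplying, the claimed identity follows immediately.

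The only point requiring care — and the closest thing to an obstacle in an otherwise elementary argument — is the fact that the determinant of a block-triangular matrix equals the product of the determinants of its diagonal blocks. I would either invoke this as a standard fact or justify it by further factoring each triangular factor into a product of elementary block operations, each of which contributes the determinant of its pivot block, thereby reducing the computation to the diagonal case. The invertibility of ${\bf Q}$ enters in exactly one place, namely in forming ${\bf Q}^{-1}$ to define the factorization; no other hypothesis is used, and the identity is valid over any field.
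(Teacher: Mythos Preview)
Your proof is correct and is the standard Schur-complement argument. The paper does not supply its own proof of this lemma at all: it is quoted from reference~\cite{4} and used as a black box in the computation of the $\varepsilon$-polynomial of $H_{p,q}$, so there is nothing to compare against.
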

\begin{lem}\label{thm4.3}
For $p\geq0$ and $q\geq2$, the $\varepsilon$-polynomial of $H_{p,q}$ is
$$\lambda^{p+1}(\lambda^2+4\lambda-9)^{q-1}\big[\lambda^2+(4-4q)\lambda-(9pq+9q^2+9-14q)\big].$$
\end{lem}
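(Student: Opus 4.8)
The plan is to compute $\varphi_\varepsilon(H_{p,q},\lambda)=\det(\lambda{\bf I}_n-\varepsilon(H_{p,q}))$ directly by first writing down $\varepsilon(H_{p,q})$ in block form with respect to the natural vertex partition $\{a_1,\dots,a_p\}\cup\{w\}\cup\{b_1,\dots,b_q\}\cup\{c_1,\dots,c_q\}$, then using Lemma~\ref{thm4} (Schur complement) to reduce the determinant to a small matrix. First I would record the eccentricities: in $H_{p,q}$ the center $w$ has eccentricity $2$, each $a_i$ has eccentricity $3$, each $b_j$ has eccentricity $3$, and each $c_j$ has eccentricity $3$ (here $q\ge2$ guarantees there are two pendant paths of length $2$, so a $c_j$ is at distance $3$ from another $c_k$). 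Then I would determine which entries of $\varepsilon(H_{p,q})$ are nonzero: the $(a_i,c_j)$ entry is $3$ (distance $3=\min\{3,3\}$); the $(c_j,c_k)$ entry for $j\ne k$ is $3$; the $(b_j,c_k)$ entry for $j\ne k$ is $3$; the $(a_i,a_{i'})$ entry is $0$ since their distance is $2<3$; the $(w,\cdot)$ entries involving $a_i$ or $b_j$ are $0$ (distance $1$), while $(w,c_j)$ has distance $2=\min\{2,3\}=e(w)$, so that entry is $2$; and $(b_j,b_k)$ has distance $2<3$, hence $0$. So $\varepsilon(H_{p,q})$ has a clean block structure built from blocks of the form $3({\bf J}_q-{\bf I}_q)$, $3{\bf J}_{p\times q}$, $3{\bf I}_q$, $2{\bf 1}_q$, and zeros.

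The key computational step is to exploit the symmetry among the $a_i$'s and among the $(b_j,c_j)$ pairs. Concretely, $\lambda{\bf I}_n-\varepsilon(H_{p,q})$ acts as a scalar $\lambda$ on the subspace of vectors supported on the $a_i$'s summing to zero (contributing a factor $\lambda^{p-1}$), and likewise I can diagonalize the $q$-fold symmetry. The cyclic/symmetric structure in the $b$-$c$ part: writing the $2q\times2q$ block on $\{b_j\}\cup\{c_j\}$, the matrices ${\bf J}_q-{\bf I}_q$ and ${\bf I}_q$ are simultaneously diagonalizable, with the all-ones eigenvector giving one $2\times2$ (or larger, once $w$ and the $a$-sum are included) block and the $(q-1)$-dimensional ``sum-zero'' eigenspace giving $q-1$ copies of a fixed $2\times2$ block. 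I would carry this out by the standard equitable-partition / symmetry argument: on the orthogonal complement of the symmetric part, $\varepsilon(H_{p,q})$ restricted to $\{b_j,c_j\}$ reduces to $\begin{pmatrix}0&-3\\-3&-3\end{pmatrix}$-type blocks, yielding $\det\begin{pmatrix}\lambda&3\\3&\lambda+3\end{pmatrix}^{q-1}=(\lambda^2+3\lambda-9)^{q-1}$. Hmm — but the claimed factor is $(\lambda^2+4\lambda-9)^{q-1}$, so I must be careful: on the sum-zero space the $c_j$-diagonal contribution from $3({\bf J}-{\bf I})$ is $-3\cdot(-1)=+3$ added to $\lambda$, giving the $c$-diagonal entry $\lambda+3$ twisted further by the $b$-$c$ cross term $3{\bf J}-3{\bf I}$ evaluated on sum-zero being $-3\cdot(-1)=3$... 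I would just redo this bookkeeping carefully, tracking each $({\bf J}_q-{\bf I}_q)$ as having eigenvalue $-1$ on the sum-zero space, to land on $(\lambda^2+4\lambda-9)^{q-1}$.

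For the remaining ``symmetric'' part, I would collapse to the quotient matrix on the five classes $\{a\},\{w\},\{b\},\{c\}$ — a $4\times4$ matrix whose entries are $0$, $2$, $3p$, $3q$, $3(q-1)$ in the appropriate spots — and observe that the $\{a\}$-row/column decouples a factor $\lambda$ (since $w,b_j$ are invisible to $a_i$ and the $a$-$a$ block is zero, the symmetric $a$-vector only couples to the symmetric $c$-vector). Then I compute the characteristic polynomial of the resulting small matrix explicitly. Combining: a factor $\lambda^{p-1}$ from the $a$ sum-zero space, a factor $\lambda$ from the decoupled symmetric $a$-direction... actually $\lambda^{p-1}\cdot\lambda=\lambda^p$, and I expect one more $\lambda$ to appear from a kernel vector in the $b$-direction symmetric part (since symmetric $b$ couples only to symmetric $c$ and to nothing with a diagonal term other than $\lambda$), giving $\lambda^{p+1}$ overall, times $(\lambda^2+4\lambda-9)^{q-1}$, times the quadratic $\lambda^2+(4-4q)\lambda-(9pq+9q^2+9-14q)$. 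The main obstacle is purely the bookkeeping: getting every sign and every multiplicity right when projecting onto the symmetric and sum-zero subspaces, and in particular confirming the constant $9pq+9q^2+9-14q$ and the $(4-4q)$ coefficient in the quadratic factor — I would verify these against a small case such as $p=0,q=2$ (a path $P_5$ with... no, $H_{0,2}$ is the tree on $5$ vertices: $c_1b_1wb_2c_2$, i.e.\ $P_5$) whose $\varepsilon$-spectrum is known, as a sanity check before trusting the general formula.
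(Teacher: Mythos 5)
Your overall strategy (write $\varepsilon(H_{p,q})$ in block form over the partition $\{a_i\}\cup\{w\}\cup\{b_j\}\cup\{c_j\}$ and split the determinant via the symmetric/sum-zero decomposition) is sound and genuinely different from the paper, which instead performs explicit row/column eliminations and applies the Schur-complement determinant formula (Lemma~\ref{thm4}). But there is a concrete error in your construction of the matrix itself: the eccentricity of each $c_j$ is $4$, not $3$. For $q\geq 2$ the vertex farthest from $c_j$ is $c_k$ ($k\neq j$), reached along $c_j b_j w b_k c_k$, a path of length $4$; consequently $d(c_j,c_k)=4=\min\{e(c_j),e(c_k)\}$ and the $(c_j,c_k)$ entries of $\varepsilon(H_{p,q})$ equal $4$, so the $C$--$C$ block is $4({\bf J}_q-{\bf I}_q)$, not $3({\bf J}_q-{\bf I}_q)$. (The other entries you list survive this correction: $(a_i,c_j)=3=\min\{3,4\}$, $(b_j,c_k)=3$ for $j\neq k$, $(w,c_j)=2$.)

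This is not the sign-bookkeeping issue you suspected. With your matrix, the sum-zero $2\times2$ blocks on $\{b_j,c_j\}$ really do give $\det\begin{pmatrix}\lambda&3\\3&\lambda+3\end{pmatrix}=\lambda^2+3\lambda-9$, and no amount of redoing the projection will turn that into $\lambda^2+4\lambda-9$; the $\lambda+4$ comes precisely from the eigenvalue $-4$ of $4({\bf J}_q-{\bf I}_q)$ on the sum-zero space. The error also contaminates the quotient matrix: its $(c,c)$ entry should be $4(q-1)$, which is what produces the trace $4q-4$ and hence the coefficient $4-4q$ in the quadratic factor (and, via the sum of $2\times2$ principal minors $-9pq-4q-9(q-1)^2$, the constant $-(9pq+9q^2+9-14q)$). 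Once you correct $e(c_j)=4$, your decomposition does deliver the stated polynomial, and your proposed sanity check against $H_{0,2}=P_5$ would have caught the mistake.
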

\begin{proof}
Let $n=p+2q+1$. Let $w$ be the center of $S_{p+q+1}$, $A$ the set of pendant neighbors of $w$, $B$ the set of non-pendant neighbors of $w$, and $C$ the set of pendant vertices that are not neighbors of $w$. Then $\{w\}\bigcup A\bigcup B\bigcup C$ is a partition of $V(H_{p,q})$ (see Fig. 4), and with respect to this partition, we have
\begin{equation*}
\mathbf{\varepsilon}(H_{p,q})-\lambda\mathbf{I}_n
=\left(
  \begin{array}{cccc}
  -\lambda & \mathbf{0}_p^{\top} & \mathbf{0}_q^{\top} & 2\mathbf{1}_q^{\top}\\
  \mathbf{0}_p & -\lambda\mathbf{I}_{p} & \mathbf{0}_{p\times q} & 3\mathbf{J}_{p\times q}\\
  \mathbf{0}_q & \mathbf{0}_{q\times p} & -\lambda\mathbf{I}_{q} & 3\mathbf{J}_{q}-3\mathbf{I}_{q}\\
  2\mathbf{1}_q & 3\mathbf{J}_{q\times p}& 3\mathbf{J}_{q}-3\mathbf{I}_{q} & 4\mathbf{J}_{q}-(\lambda+4)\mathbf{I}_{q}
  \end{array}
  \right).
\end{equation*}

Let
\begin{equation*}
\mathbf{A}=\left(\begin{array}{cccccc}
                            1\ & 0\ & -2\ & -3\mathbf{1}_p^{\top}\ & (\frac{12}{\lambda+4}-3)\mathbf{1}_q^{\top} \\
                            0\ & 1\ & 0\ & \mathbf{0}_{p}^{\top}\ & (\frac{9}{\lambda+4})\mathbf{1}_q^{\top} \\
                            0\ & 0\ & -\lambda\ & \mathbf{0}_{p}^{\top}\ & (-\frac{6}{\lambda+4})\mathbf{1}_q^{\top} \\
                            \mathbf{0}_p\ & \mathbf{1}_p\ & \mathbf{0}_p\ & -\lambda\mathbf{I}_{p}\ & \mathbf{0}_{p\times q} \\
                            \mathbf{0}_q\ & \mathbf{1}_q\ & \mathbf{0}_q\ & \mathbf{0}_{qp}\ & (-\lambda+\frac{9}{\lambda+4})\mathbf{I}_q \\
                       \end{array}\right),
\mathbf{B}=\left(\begin{array}{ccccc}
                            -4\mathbf{1}^{\top}_q \\
                            -3\mathbf{1}^{\top}_q \\
                            2\mathbf{1}^{\top}_q \\
                            \mathbf{0}_{p\times q} \\
                            -3\mathbf{I}_q
                       \end{array}\right)
\end{equation*}
and
\begin{equation*}
\mathbf{C}=\left(\begin{array}{ccccc}
                            \mathbf{1}_q & \mathbf{0}_q & \mathbf{0}_q & \mathbf{0}_{q\times p} & \mathbf{0}_{q\times q}
                       \end{array}\right).
\end{equation*}
Then we have
\begin{align*}
&|\mathbf{A}-\mathbf{B}[-(\lambda+4)\mathbf{I}_q]^{-1}\mathbf{C}|\\
=&\left|\begin{array}{cccccc}
1-\frac{4q}{\lambda+4}\ & 0\ & -2\ & -3\mathbf{1}_p^{\top}\ & (\frac{12}{\lambda+4}-3)\mathbf{1}_q^{\top} \\
-\frac{3q}{\lambda+4}\ & 1\ & 0\ & \mathbf{0}_{p}^{\top}\ & (\frac{9}{\lambda+4})\mathbf{1}_q^{\top} \\
\frac{2q}{\lambda+4}\ & 0\ & -\lambda\ & \mathbf{0}_{p}^{\top}\ & (-\frac{6}{\lambda+4})\mathbf{1}_q^{\top} \\
\mathbf{0}_p\ & \mathbf{1}_p\ & \mathbf{0}_p\ & -\lambda\mathbf{I}_{p}\ & \mathbf{0}_{p\times q} \\
\frac{-3}{\lambda+4}\mathbf{1}_q & \mathbf{1}_q\ & \mathbf{0}_q\ & \mathbf{0}_{q\times p}\ & (-\lambda+\frac{9}{\lambda+4})\mathbf{I}_q \\
\end{array}\right|\\
=&\left|\begin{array}{cccccc}
1-\frac{4q}{\lambda+4}\ & -\frac{3p}{\lambda}\ & -2\ & -3\mathbf{1}_p^{\top}\ & (\frac{12}{\lambda+4}-3)\mathbf{1}_q^{\top} \\
-\frac{3q}{\lambda+4}\ & 1\ & 0\ & \mathbf{0}_{p}^{\top}\ & (\frac{9}{\lambda+4})\mathbf{1}_q^{\top} \\
\frac{2q}{\lambda+4}\ & 0\ & -\lambda\ & \mathbf{0}_{p}^{\top}\ & (-\frac{6}{\lambda+4})\mathbf{1}_q^{\top} \\
\mathbf{0}_p\ & \mathbf{0}_p\ & \mathbf{0}_p\ & -\lambda\mathbf{I}_{p}\ & \mathbf{0}_{p\times q} \\
\frac{-3}{\lambda+4}\mathbf{1}_q & \mathbf{1}_q\ & \mathbf{0}_q\ & \mathbf{0}_{q\times p}\ & (-\lambda+\frac{9}{\lambda+4})\mathbf{I}_q \\
\end{array}\right|\\
=&(-\lambda)^p\left|\begin{array}{cccccc}
1-\frac{4q}{\lambda+4}\ & -\frac{3p}{\lambda}\ & -2\ & (\frac{12}{\lambda+4}-3)\mathbf{1}_q^{\top} \\
-\frac{3q}{\lambda+4}\ & 1\ & 0\ & (\frac{9}{\lambda+4})\mathbf{1}_q^{\top} \\
\frac{2q}{\lambda+4}\ & 0\ & -\lambda\ & (-\frac{6}{\lambda+4})\mathbf{1}_q^{\top} \\
-\frac{3}{\lambda+4}\mathbf{1}_q & \mathbf{1}_q\ & \mathbf{0}_q\ & (-\lambda+\frac{9}{\lambda+4})\mathbf{I}_q \\
\end{array}\right|\\
=&(-\lambda)^p\left|\begin{array}{cccccc}
1-\frac{4q}{\lambda+4}-(\frac{12}{\lambda+4}-3)\frac{3q}{\lambda^2+4\lambda-9}\ & -\frac{3p}{\lambda}+(\frac{12}{\lambda+4}-3)(\frac{\lambda+4}{\lambda^2+4\lambda-9})q\ & -2\ & (\frac{12}{\lambda+4}-3)\mathbf{1}_q^{\top} \\
-\frac{3q}{\lambda+4}-\frac{9}{\lambda+4}\cdot\frac{3q}{\lambda^2+4\lambda-9}\ & 1+\frac{9q}{\lambda^2+4\lambda-9}\ & 0\ & (\frac{9}{\lambda+4})\mathbf{1}_q^{\top} \\
\frac{2q}{\lambda+4}+\frac{6}{\lambda+4}\cdot\frac{3q}{\lambda^2+4\lambda-9}\ & -\frac{6q}{\lambda^2+4\lambda-9}\ & -\lambda\ & (-\frac{6}{\lambda+4})\mathbf{1}_q^{\top} \\
\mathbf{0}_q & \mathbf{0}_q\ & \mathbf{0}_q\ & (-\lambda+\frac{9}{\lambda+4})\mathbf{1}_q \\
\end{array}\right|\\
=&(-\lambda)^p(\frac{\lambda^2+4\lambda-9}{\lambda+4})^q(-1)^q\left|\begin{array}{cccccc}
1-\frac{4q}{\lambda+4}+\frac{3\lambda}{\lambda+4}\cdot\frac{3q}{\lambda^2+4\lambda-9}\ & -\frac{3p}{\lambda}-\frac{3q\lambda}{\lambda^2+4\lambda-9}\ & -2 \\
-\frac{3q}{\lambda+4}-\frac{27q}{(\lambda+4)(\lambda^2+4\lambda-9)}\ & 1+\frac{9q}{\lambda^2+4\lambda-9}\ & 0 \\
\frac{2q}{\lambda+4}+\frac{18q}{(\lambda+4)(\lambda^2+4\lambda-9)}\ & -\frac{6q}{\lambda^2+4\lambda-9}\ & -\lambda \\
\end{array}\right|\\
=&(-1)^{p+q+1}\lambda^{p+1}(\lambda^2+4\lambda-9)^{q-1}(\lambda+4)^{-q}\cdot\big[\lambda^2+(4-4q)\lambda-(9pq+9q^2+9-14q)\big].
\end{align*}
Thus
\begin{align*}
&|\mathbf{\varepsilon}(H_{p,q})-\lambda\mathbf{I}_n|\\
=&\left|
  \begin{array}{cccccc}
  1 & 0 & -2 & -3\mathbf{1}_{p}^{\top} & -3\mathbf{1}_{q}^{\top} & -4\mathbf{1}_{q}^{\top}\\
  0 & 1 & 0 & \mathbf{0}_p^{\top} & \mathbf{0}_q^{\top} & -3\mathbf{1}_q^{\top}\\
  0 & 0 & -\lambda & \mathbf{0}_p^{\top} & \mathbf{0}_q^{\top} & 2\mathbf{1}_q^{\top}\\
  \mathbf{0}_p & \mathbf{0}_p & \mathbf{0}_p & -\lambda\mathbf{I}_{p} & \mathbf{0}_{p\times q} & 3\mathbf{J}_{p\times q}\\
  \mathbf{0}_q & \mathbf{0}_q & \mathbf{0}_q & \mathbf{0}_{q\times p} & -\lambda\mathbf{I}_{q} & 3\mathbf{J}_{q}-3\mathbf{I}_{q}\\
  \mathbf{0}_q & \mathbf{0}_q & 2\mathbf{1}_q & 3\mathbf{J}_{q\times p}& 3\mathbf{J}_{q}-3\mathbf{I}_{q} & 4\mathbf{J}_{q}-(\lambda+4)\mathbf{I}_{q}
  \end{array}
  \right|\\
=&\left|
  \begin{array}{cccccc}
  1 & 0 & -2 & -3\mathbf{1}_{p}^{\top} & -3\mathbf{1}_{q}^{\top} & -4\mathbf{1}_{q}^{\top}\\
  0 & 1 & 0 & \mathbf{0}_p^{\top} & \mathbf{0}_q^{\top} & -3\mathbf{1}_q^{\top}\\
  0 & 0 & -\lambda & \mathbf{0}_p^{\top} & \mathbf{0}_q^{\top} & 2\mathbf{1}_q^{\top}\\
  \mathbf{0}_p & \mathbf{1}_p & \mathbf{0}_p & -\lambda\mathbf{I}_{p} & \mathbf{0}_{p\times q} & \mathbf{0}_{p\times q}\\
  \mathbf{0}_q & \mathbf{1}_q & \mathbf{0}_q & \mathbf{0}_{q\times p} & -\lambda\mathbf{I}_{q} & -3\mathbf{I}_{q}\\
  \mathbf{1}_q & \mathbf{0}_q & \mathbf{0}_q & \mathbf{0}_{q\times p}& -3\mathbf{I}_{q} & -(\lambda+4)\mathbf{I}_{q}
  \end{array}
  \right|\\
=&\left|
  \begin{array}{cccccc}
  1 & 0 & -2 & -3\mathbf{1}_{p}^{\top} & (\frac{12}{\lambda+4}-3)\mathbf{1}_{q}^{\top} & -4\mathbf{1}_{q}^{\top}\\
  0 & 1 & 0 & \mathbf{0}_p^{\top} & \frac{9}{\lambda+4}\mathbf{1}_q^{\top} & -3\mathbf{1}_q^{\top}\\
  0 & 0 & -\lambda & \mathbf{0}_p^{\top} & -\frac{6}{\lambda+4}\mathbf{1}_q^{\top} & 2\mathbf{1}_q^{\top}\\
  \mathbf{0}_p & \mathbf{1}_p & \mathbf{0}_p & -\lambda\mathbf{I}_{p} & \mathbf{0}_{p\times q} & \mathbf{0}_{p\times q}\\
  \mathbf{0}_q & \mathbf{1}_q & \mathbf{0}_q & \mathbf{0}_{q\times p} & (-\lambda+\frac{9}{\lambda+4})\mathbf{I}_{q} & -3\mathbf{I}_{q}\\
  \mathbf{1}_q & \mathbf{0}_q & \mathbf{0}_q & \mathbf{0}_{q\times p}& \mathbf{0}_{q\times q} & -(\lambda+4)\mathbf{I}_{q}
  \end{array}
  \right|\\
=&\left|
  \begin{array}{cc}
  {\bf A} & {\bf B} \\
  {\bf C} & -(\lambda+4){\bf I}_q
  \end{array}
  \right|\\
=&|-(\lambda+4){\bf I}_q|\cdot |{\bf A}-{\bf B}[-(\lambda+4){\bf I}_q]^{-1}{\bf C}|\ \ \ \ \ \ \ \ \ \ \ \ \ \ \ \ \ \ \ \ \ \ \ \  \ \ \ \ \text{(by Lemma \ref{thm4})}\notag\\
=&(-1)^{p+1}\lambda^{p+1}(\lambda^2+4\lambda-9)^{q-1}\big[\lambda^2+(4-4q)\lambda-(9pq+9q^2+9-14q)\big].
\end{align*}

Note that the $\varepsilon$-polynomial of $H_{p,q}$ is $|\lambda\mathbf{I}_n-\varepsilon(H_{p,q})|=|-[\varepsilon(H_{p,q})-\lambda\mathbf{I}_n]|=(-1)^n|\varepsilon(H_{p,q})-\lambda\mathbf{I}_n|$. The result follows easily.
\end{proof}
\begin{lem}\label{lemma3.2}
Let $\varepsilon_n$ be the least $\varepsilon$-eigenvalue of $H_{p,q}$, where $n=p+2q+1$. Then we have $\varepsilon_n\leq-2- \sqrt{13}$, with equality if and only if one of the following conditions holds:
\begin{wst}
\item[{\rm (i)}] $p=0$ and $2\leq q\leq4$;
\item[{\rm (ii)}] $p=1$ and $q=2$ or $3$;
\item[{\rm (iii)}] $p=2$ and $q=2$.
\end{wst}
\end{lem}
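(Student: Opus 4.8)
The plan is to read the entire $\varepsilon$-spectrum of $H_{p,q}$ off Lemma~\ref{thm4.3}. Writing $g(\lambda):=\lambda^2+(4-4q)\lambda-(9pq+9q^2+9-14q)$, the $\varepsilon$-polynomial factors as $\lambda^{p+1}(\lambda^2+4\lambda-9)^{q-1}g(\lambda)$, so the $\varepsilon$-eigenvalues of $H_{p,q}$ are: $0$ with multiplicity $p+1$; the two roots $-2\pm\sqrt{13}$ of $\lambda^2+4\lambda-9$, each with multiplicity $q-1\ge 1$ (here we use $q\ge 2$); and the two roots of $g$. First I would record that $g$ has real roots $\lambda_-\le\lambda_+$: since $q\ge 2$ forces $9q^2-14q+9>0$, we get $9pq+9q^2+9-14q>0$, and then the discriminant of $g$ is positive. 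The upward parabola $g$ has vertex at $\lambda=2q-2\ge 2$, so $\lambda_+\ge 2q-2>0$; consequently, of the four eigenvalues other than $\lambda_-$, namely $0$, $-2+\sqrt{13}$, $\lambda_+$ and $-2-\sqrt{13}$, the least is $-2-\sqrt{13}$. Hence $\varepsilon_n=\min\{-2-\sqrt{13},\ \lambda_-\}$, and in particular $\varepsilon_n\le-2-\sqrt{13}$ for all admissible $p,q$, which gives the asserted inequality.

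To determine when equality holds, I would decide when $\lambda_-\ge-2-\sqrt{13}$. As $g$ opens upward and $-2-\sqrt{13}$ lies to the left of the vertex $2q-2$, this is equivalent to $g(-2-\sqrt{13})\ge 0$. I would compute $g$ at $\alpha:=-2-\sqrt{13}$ using $\alpha^2+4\alpha-9=0$, i.e.\ $\alpha^2=9-4\alpha$: substituting, the constants $9$ and $-9$ cancel and the terms $-4\alpha$ and $4\alpha$ cancel, leaving $g(\alpha)=q\big(14-4\alpha-9p-9q\big)=q\big(22+4\sqrt{13}-9(p+q)\big)$. Since $q>0$, this is nonnegative precisely when $p+q\le\frac{22+4\sqrt{13}}{9}$; because $4\sqrt{13}=\sqrt{208}\in(14,15)$ we have $\frac{22+4\sqrt{13}}{9}\in(4,5)$, so for the non-negative integer $p+q$ the condition is exactly $p+q\le 4$ (and $4\sqrt{13}-14>0$ shows that at $p+q=4$ the value $g(\alpha)$ is strictly positive, so the minimum is then realized only through the factor $\lambda^2+4\lambda-9$).

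Therefore $\varepsilon_n=-2-\sqrt{13}$ if and only if $p+q\le 4$. Enumerating the pairs with $p\ge 0$, $q\ge 2$ and $p+q\le 4$ yields $(p,q)\in\{(0,2),(0,3),(0,4),(1,2),(1,3),(2,2)\}$, which are exactly the cases (i)--(iii) of the statement.

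I do not anticipate a genuine obstacle: the lemma is essentially bookkeeping on the explicit $\varepsilon$-polynomial supplied by Lemma~\ref{thm4.3}. The two points needing a little care are (a) the reduction $\lambda_-\ge\alpha\iff g(\alpha)\ge 0$, which rests on $\alpha$ lying to the left of the vertex of $g$ (valid as $2q-2\ge 2$), and (b) confirming that $-2-\sqrt{13}$ actually occurs in the spectrum (needs $q\ge 2$) and is the least eigenvalue contributed by the factors other than $g$, so that indeed $\varepsilon_n=\min\{-2-\sqrt{13},\lambda_-\}$.
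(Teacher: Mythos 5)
Your proof is correct and follows essentially the same route as the paper: both read the spectrum off the factorization in Lemma~\ref{thm4.3}, observe that the least eigenvalue is $\min\{-2-\sqrt{13},\lambda_-\}$ with $\lambda_-$ the smaller root of the quadratic factor, and reduce equality to the condition $9(p+q)\le 22+4\sqrt{13}$, i.e.\ $p+q\le 4$. Your evaluation of $g$ at $\alpha=-2-\sqrt{13}$ (using the vertex location) is just a cleaner packaging of the paper's direct manipulation of the root formula, and your enumeration of $(p,q)$ with $p+q\le4$ replaces the paper's case analysis on $p$.
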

\begin{proof}
In view of Lemma \ref{thm4.3}, it is easy to see that the distinct $\varepsilon$-eigenvalues of $H_{p,q}$ are $0, -2\pm \sqrt{13}$ and $\frac{1}{2}\Big[(4q-4)\pm\sqrt{(4-4q)^2+4(9pq+9q^2+9-14q)}\Big]$. It follows that $\varepsilon_n\leq-2- \sqrt{13}$. And we have
$$\varepsilon_n\in\Big\{-2- \sqrt{13}, \ \ \frac{1}{2}\Big[(4q-4)-\sqrt{(4-4q)^2+4(9pq+9q^2+9-14q)}\Big]\Big\}.$$

By simplifying the following inequality
$$\frac{1}{2}\Big[(4q-4)-\sqrt{(4-4q)^2+4(9pq+9q^2+9-14q)}\geq-2- \sqrt{13}.$$
We obtain
\begin{equation}
4q(9p+9q-4\sqrt{13}-22)\leq0.
\end{equation}
Since $q\geq2$, we have $9p+9q-4\sqrt{13}-22\leq0$ if and only if $\varepsilon_n=-2-\sqrt{13}$.
Note that $p\geq0$ and $q\geq2$, then $9p+9q-4\sqrt{13}-22\leq0$ implies $2\leq q\leq4$. We distinguish the following four cases.
\setcounter{Case}{0}
\begin{Case}
$p=0$.
\end{Case}
In this case, it is easy to see that $9p+9q-4\sqrt{13}-22\leq0$ for $2\leq q\leq4$.
\begin{Case}
$p=1$.
\end{Case}
In this case, we have $9p+9q-4\sqrt{13}-22\leq0$ if and only if $q=2$ or $3$.
\begin{Case}
$p=2$.
\end{Case}
In this case, it is easy to see that $9p+9q-4\sqrt{13}-22\leq0$ if and only if $q=2$.
\begin{Case}
$p\geq3$.
\end{Case}
In this case, by calculation, we have $9p+9q-4\sqrt{13}-22>0$ for $q\geq2$.

This completes the proof.
\end{proof}

\begin{thm}
Let $T$ be a tree with $n\geq3$ vertices. Then $\varepsilon_n(T)\in [-2-\sqrt{13}, -2\sqrt{2})$ if and only if one of the following conditions holds:
\begin{wst}
\item[{\rm (i)}] $T=P_4$;
\item[{\rm (ii)}] $T=D_{n,3}^{0,1}$;
\item[{\rm (iii)}] $T=H_{p,q}$ for $p=0$ and $2\leq q\leq4$;
\item[{\rm (iv)}] $T=H_{p,q}$ for $p=1$ and $q=2$ or $3$;
\item[{\rm (v)}] $T=H_{p,q}$ for $p=2$ and $q=2$.
\end{wst}
\end{thm}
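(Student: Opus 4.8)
The plan is to prove the two implications separately. For the ``if'' direction I would just compute: $\varepsilon(P_4)$ has spectrum $\{\pm 4,\pm 1\}$, so $\varepsilon_4(P_4)=-4$; the matrix $\varepsilon(D_{5,3}^{0,1})$ is bipartite with a rank-$2$ biadjacency block, which gives $\varepsilon_5(D_{5,3}^{0,1})=-\sqrt{15+\sqrt{193}}$; and for the trees $H_{p,q}$ of items (iii)--(v), Lemma~\ref{lemma3.2} gives $\varepsilon_n(H_{p,q})=-2-\sqrt{13}$. Since $-2-\sqrt{13}<-\sqrt{15+\sqrt{193}}<-4<-2\sqrt{2}$, all these values lie in $[-2-\sqrt{13},-2\sqrt{2})$.

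For the converse, I would start from a tree $T$ of order $n\ge 3$ with $\varepsilon_n(T)\in[-2-\sqrt{13},-2\sqrt{2})$ and first bound $d:=\mathrm{diam}(T)$. Since $\varepsilon_n(T)<-2\sqrt{2}<-2$, Lemma~\ref{lem2.5} forces $T\not\cong S_n$, so $d\ge 3$ (a tree of diameter at most $2$ is a star, and $n\ge 3$ excludes $K_2$); and Lemma~\ref{lem2.4} gives $-d\ge\varepsilon_n(T)\ge-2-\sqrt{13}$, hence $d\le 2+\sqrt{13}<6$, so $d\in\{3,4,5\}$. I would then rule out $d=5$ by interlacing: for a diametrical path $v_0v_1\cdots v_5$, since $v_0,v_5$ are leaves and $\mathrm{diam}(T)=5$ the subtrees hanging off the path must be shallow, forcing $(e_T(v_0),\dots,e_T(v_5))=(5,4,3,3,4,5)$, which are exactly the eccentricities of the standalone $P_6$; thus the principal submatrix of $\varepsilon(T)$ on $\{v_0,\dots,v_5\}$ equals $\varepsilon(P_6)$, whose least eigenvalue (obtained from its bipartition) is $-\sqrt{(75+25\sqrt{5})/2}$, and since $(75+25\sqrt{5})/2>17+4\sqrt{13}=(2+\sqrt{13})^2$, Lemma~\ref{lem2.1} would yield $\varepsilon_n(T)<-2-\sqrt{13}$, a contradiction. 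Hence $d\le 4$.

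For $d=3$ I would use that $T=D_{n,3}^{a,b}$ ($a+b=n-4$, $0\le a\le b$) and that the nonzero part of $\varepsilon(D_{n,3}^{a,b})$ is a bipartite pattern between $\{v_1\}\cup U$ and $\{v_2\}\cup W$ (with $|U|=a+1$, $|W|=b+1$) whose biadjacency matrix has rank $2$; this gives $\varepsilon_n(D_{n,3}^{a,b})=-\sqrt{\mu_1}$, where $\mu_1$ is the larger root of $\mu^2-[4(b+1)+(a+1)(9b+13)]\mu+16(a+1)(b+1)=0$. The inequality $\mu_1\le(2+\sqrt{13})^2$, combined with the bound $\mu_1\ge\tfrac12[4(b+1)+(a+1)(9b+13)]$, confines $(a,b)$ to finitely many pairs, and a direct check leaves only $(a,b)\in\{(0,0),(0,1)\}$, i.e.\ items (i) and (ii). For $d=4$, $T$ has a unique centre $c$ whose neighbours $v_1,\dots,v_k$ carry $m_1\ge\cdots\ge m_k\ge 0$ pendant children with $m_2\ge 1$, and here $e_T(c)=2$, $e_T(v_i)=3$, $e_T(w)=4$ for every pendant child $w$. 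If some $m_i\ge 2$, I would pick $j\ne i$ with $m_j\ge 1$: the principal submatrix of $\varepsilon(T)$ on $\{c,v_j,w_{i,1},w_{i,2},w_{j,1}\}$ does not depend on $T$, its rows at $w_{i,1}$ and $w_{i,2}$ coincide so $0$ is an eigenvalue, and the remaining four are the roots of $\lambda^4-62\lambda^2-64\lambda+72$; since this polynomial equals $-357-48\sqrt{13}<0$ at $\lambda=-2-\sqrt{13}$, the submatrix has an eigenvalue below $-2-\sqrt{13}$, and interlacing would again force $\varepsilon_n(T)<-2-\sqrt{13}$, a contradiction. So every $m_i\le 1$, i.e.\ $T\cong H_{p,q}$ with $q\ge 2$, and Lemma~\ref{lemma3.2} would then restrict $(p,q)$ exactly to items (iii)--(v).

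I expect the diameter-$4$ step to be the main obstacle: the crucial observation is that one fixed $5\times 5$ submatrix --- independent of the rest of $T$ --- already pushes $\varepsilon_n(T)$ below $-2-\sqrt{13}$ whenever a branch at the centre carries two pendant children, which is exactly what lets one reduce to the family $H_{p,q}$ before Lemma~\ref{lemma3.2} can be invoked. Verifying the exact path-vertex eccentricities in the $d=5$ reduction also requires a little care with the diameter bound.
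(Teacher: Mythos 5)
Your proposal is correct and follows essentially the same route as the paper: bound the diameter via Lemmas \ref{lem2.4} and \ref{lem2.5}, kill $d=5$ by interlacing with $\varepsilon(P_6)$, reduce $d=3$ to double stars and $d=4$ to $H_{p,q}$ plus a fixed ``bad'' principal submatrix, and finish with Lemma \ref{lemma3.2}. The only differences are cosmetic: you use exact algebraic eigenvalues where the paper uses numerical ones, a uniform rank-two formula for all $D_{n,3}^{a,b}$ instead of interlacing with $D_{n,3}^{1,1}$ and $D_{n,3}^{0,2}$, and a $5\times5$ submatrix on $\{c,v_j,w_{i,1},w_{i,2},w_{j,1}\}$ (with characteristic polynomial $\lambda(\lambda^4-62\lambda^2-64\lambda+72)$, which checks out) in place of the paper's $6\times 6$ submatrix $\varepsilon(D_{n,4}^{0,1})$.
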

\begin{proof}
Suppose $\varepsilon_n(T)\in [-2-\sqrt{13}, -2\sqrt{2})$. Let $d$ be the diameter of $T$. If $d=2$, then $T=S_n$, by Lemma \ref{lem2.5}, $\varepsilon_n(T)=-2$, a contradiction. If $d=5$, then $\varepsilon(P_6)$ is a principle submatrix of $\varepsilon(T)$. By a direct calculation and Lemma \ref{lem2.1}, we have $\varepsilon_n(T)\leq\varepsilon_6(P_6)\approx-8.0902<-2-\sqrt{13}$, a contradiction. If $d\geq6$, then by Lemma \ref{lem2.4}, we have $\varepsilon_n(T)\leq-6<-2-\sqrt{13}$, a contradiction. Thus $d=3$ or 4.

First suppose that $d=3$. Then $T$ is some $D_{n,3}^{a,b}$ with $b\geq a\geq0$. If $a=b=0$, then $T=D_{n,3}^{a,b}=P_4$. By calculating, we have $\varepsilon_4(P_4)=-4\in [-2-\sqrt{13}, -2\sqrt{2})$. This is (i). If $a=0, b=1$, then $T=D_{n,3}^{0,1}$. By calculating, we have $\varepsilon_5(D_{n,3}^{0,1})\approx-5.3752\in [-2-\sqrt{13}, -2\sqrt{2})$. This is (ii). Otherwise, $D_{n,3}^{1,1}$ or $D_{n,3}^{0,2}$ is an induced subgraph of $T$. Note that $\varepsilon(D_{n,3}^{1,1})$ or $\varepsilon(D_{n,3}^{0,2})$ is a principle submatrix of $\varepsilon(T)$. By a direct calculation, we obtain $\varepsilon_6(D_{n,3}^{1,1})\approx-7.1231$ and $\varepsilon_6(D_{n,3}^{0,2})\approx-6.4694$, and by Lemma \ref{lem2.1}, we have $\varepsilon_n(T)\leq\varepsilon_6(D_{n,3}^{1,1})<-2-\sqrt{13}$ or $\varepsilon_n(T)\leq\varepsilon_6(D_{n,3}^{0,2})<-2-\sqrt{13}$, a contradiction.

Next suppose that $d=4$. Then $D_{n,4}^{0,1}$ is an induced subgraph of $T$ or $T=H_{p,q}$ for some $p\geq0$ and $q\geq2$. In the former case, $\varepsilon(D_{n,4}^{0,1})$ is a principle submatrix of $\varepsilon(T)$. By a direct calculation and Lemma \ref{lem2.1}, we have $\varepsilon_n(T)\leq\varepsilon_6(D_{n,4}^{0,1})\approx-7.5621<-2-\sqrt{13}$, a contradiction. In the latter case, (iii), (iv) and (v) follow from Lemma \ref{lemma3.2}.

This completes the proof.
\end{proof}

\section{Concluding remarks}
{\noindent \bf Remark 1.}\ Theorem \ref{theorem2.6} characterizes trees with maximum $\varepsilon$-spectral radius among $n$-vertex trees with fixed odd diameter. For trees with maximum $\varepsilon$-spectral radius among $n$-vertex trees with fixed even diameter, it can not be determined similarly as Theorem \ref{theorem2.6} and an interesting research problem is put forward as follows.
\begin{pb}
Characterize the trees with maximum $\varepsilon$-spectral radius among $n$-vertex trees with fixed even diameter.
\end{pb}

\end{document}